\documentclass[11pt]{amsart}
\addtolength{\textwidth}{7pc}
\addtolength{\textheight}{4.5pc}
\calclayout

\usepackage{amssymb}
\usepackage{cite}

\newtheorem{thm}{Theorem}[section]

\newtheorem{prp}[thm]{Proposition}
\newtheorem{cor}[thm]{Corollary}

\theoremstyle{definition}

\theoremstyle{remark}
\newtheorem{rmk}[thm]{Remark}

\numberwithin{equation}{section}


\newcommand{\ov}[1]{\overline{#1}}

\begin{document} 

\title[Asymptotic boundary behavior of the Bergman curvatures]{Asymptotic boundary behavior of the Bergman curvatures of a pseudoconvex domain}

\author{Sungmin Yoo}
\address{Department of Mathematics, Pohang University of Science and Technology, Pohang, 790-784, Republic of Korea}
\email{sungmin@postech.ac.kr}
\thanks{This research was supported by the SRC-GAIA (NRF-2011-0030044) through the National Research Foundation of Korea (NRF) funded by the Ministry of Education.}



\begin{abstract}
We present a method of obtaining a lower bound estimate of the curvatures of the Bergman metric without using the regularity of the kernel function on the boundary. As an application, we prove the existence of an uniform lower bound of the bisectional curvatures of the Bergman metric of a smooth bounded pseudoconvex domain near the boundary with constant Levi rank.
\end{abstract}

\maketitle


\section{Introduction}

The curvatures of the Bergman metric, such as holomorphic sectional curvatures, bisectional curvatures and Ricci curvatures are important invariants in complex differential geometry. It is well-known that the bisectional curvatures (holomorphic sectional curvatures) and the Ricci curvatures of the Bergman metric of an $n$-dimensional complex manifold are always less than $2$ and $n+1$, respectively. Contrariwise, there exists an example constructed by Herbort \cite{Herbort07}, for which the holomorphic sectional curvature is not bounded from below in certain direction. To obtain a lower bound estimate of the curvatures of the K\"{a}hler metric from a direct calculation, estimates of derivatives of the potential function up to order 4 are necessary. Therefore, in the case of the Bergman metric, estimates of derivatives of the Bergman kernel function are needed.

For instance, Klembeck \cite{Klembeck78} showed that the holomorphic sectional curvatures of a $C^{\infty}$-smooth strongly pseudoconvex bounded domain in $\mathbb{C}^n$ approach $-4/(n+1)$ near the boundary by using Fefferman's asymptotic formula for the Bergman kernel function. Even though an asymptotic formula for the Bergman kernel function does not exist, McNeal proved in \cite{McNeal89} that the holomorphic sectional curvature of the Bergman metric for a pseudoconvex bounded domain in $\mathbb{C}^2$ with finite type boundary has to be bounded. He used the subelliptic estimate for the $\ov\partial$-Neumann problem which gives an information on the regularity of the kernel function up to the boundary.

However, Kim and Yu generalized in \cite{KY96} Klembeck's theorem to the case of $C^2$-smooth boundary, without using Fefferman's asymptotic formula. Therefore, it is natural to ask whether one can prove McNeal's result without using the results on the $\ov\partial$-Neumann problem. This was posed as Problem 26 in \cite{KSCV10}.

In this paper, we give an answer to this problem. The precise result is the following theorem.

\begin{thm}\label{main}
Let $\Omega\subset\subset\mathbb{C}^n$ be a pseudoconvex domain with smooth boundary. If the Levi form has constant rank $l$, $0\leq l\leq n-1$ on a neighborhood $V$ of $z_0\in\partial\Omega$, then there is a neighborhood $W$ of $z_0$ such that all the bisectional curvatures of the Bergman metric of $\Omega$ are bounded below by a negative constant in $W$. In particular, the holomorphic sectional curvatures and Ricci curvatures are also bounded below.
\end{thm}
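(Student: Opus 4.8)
The plan is to reduce the curvature estimate to an $L^2$-extremal inequality governed only by plurisubharmonicity, and then to prove that inequality for constant-Levi-rank domains by localization and comparison with an explicit product model. To set this up I would use the Hilbert-space description of the Bergman metric and its curvature: for $z\in\Omega$ and a multi-index $\alpha$, let $h^\alpha_z\in A^2(\Omega)$ be the Riesz representative of $f\mapsto\partial^\alpha f(z)$, so that $\|h^0_z\|^2=K(z):=K_\Omega(z,z)$ and $g_{i\bar j}(z)=K(z)^{-1}\langle P_0 h^{e_i}_z, P_0 h^{e_j}_z\rangle$, where $P_0$ is the orthogonal projection onto $(\mathbb{C}\,h^0_z)^\perp$ and $e_i$ the $i$-th unit multi-index. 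Differentiating $\log K$ twice more, the curvature tensor at $z$ becomes a universal rational expression in $K(z)$, the vectors $h^\alpha_z$ with $\abs\alpha\le2$, and the orthogonal projection $P_1$ onto $\bigl(\mathrm{span}\{h^0_z,h^{e_1}_z,\dots,h^{e_n}_z\}\bigr)^\perp$. Read in coordinates normal for the Bergman metric at $z$ — which is where the vanishing first jet built into $P_1$ is used — this identity yields, for all $g$-unit vectors $X,Y$,
\[ R_{X\bar XY\bar Y}(z)\ \ge\ 2-C\,\Theta(z),\qquad \Theta(z):=\sup_{\abs\alpha=2}\frac{\|P_1 h^\alpha_z\|^2}{K(z)}, \]
the supremum over second-order derivatives in a $g$-orthonormal frame. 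By Riesz duality $\|P_1 h^\alpha_z\|^2=\sup\{\,\abs{\partial^\alpha f(z)}^2/\|f\|^2 : f\in A^2(\Omega)\setminus\{0\},\ f(z)=0,\ df(z)=0\,\}$, so Theorem~\ref{main} reduces to the uniform bound
\[ (\star)\qquad \abs{\partial^\alpha f(z)}^2\ \le\ C\,K(z)\,\|f\|^2 \qquad\bigl(f(z)=0,\ df(z)=0\bigr), \]
for $z$ near $z_0$ and $\alpha$ normalized as above. Since $\Theta$ can blow up in general (Herbort's example), the hypothesis must be used to get $(\star)$; the point of the reformulation is that $(\star)$ involves only $L^2$-norms and pointwise derivatives, never the boundary regularity of $K$.

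Next I would localize. Restricting competitors to a small ball $B=B(z_0,r)$ only enlarges the extremal quotient, so $\|P_1 h^\alpha_{z,\Omega}\|\le\|P_1 h^\alpha_{z,\Omega\cap B}\|$, while the classical Hörmander-type localization gives $K_\Omega(z)/K_{\Omega\cap B}(z)\to1$ as $z\to z_0$; hence it suffices to prove $(\star)$ for $\Omega\cap B$. Invoking constancy of the Levi rank, I would straighten the Levi foliation and normalize the defining function near $z_0$, writing $\Omega\cap B$, up to a biholomorphism fixing $z_0$, as $\{\mathrm{Re}\,w_n+\abs{w'}^2+R(w)<0\}\cap B$ with $w=(w',w'',w_n)\in\mathbb{C}^l\times\mathbb{C}^{n-1-l}\times\mathbb{C}$, where $R=O(3)$ vanishes on the leaf $\{w'=0,\ w_n=0\}\subset\partial\Omega$ and, by constancy of the rank, contributes nothing to the Levi form in the $w''$-directions. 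The associated model is the product $M=\Sigma\times P$ with $\Sigma=\{\mathrm{Re}\,w_n+\abs{w'}^2<0\}\subset\mathbb{C}^{l+1}$ (the unbounded realization of the ball in $\mathbb{C}^{l+1}$) and $P\subset\mathbb{C}^{n-1-l}$ a polydisc; here $z_0$ corresponds to a boundary point of the $\Sigma$-factor only, so along any $z\to z_0$ in $\Omega$ the $w''$-component of $z$ stays in a compact subset of $P$.

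For $M$ everything is explicit: $K_M=K_\Sigma\boxtimes K_P$, the Bergman metric of $M$ is the product metric, and its canonical anisotropic polydisc radii $\tau_j(z)$ behave like $\delta(z)^{1/2}$ in the $w'$-directions, like $\delta(z)$ in the $w_n$-direction, and like a constant in the $w''$-directions — all comparable over, and slowly varying on, the polydiscs they define, so $M$ has bounded Bergman geometry near $z_0$ (consistently: $\Sigma$ has bounded curvature by Klembeck, $P$ is fixed with the relevant points interior to it, and mixed bisectional curvatures of a product vanish). Since the $\tau_j$ are read off from the defining function, and that of $\Omega\cap B$ agrees with that of $M$ to the order controlling the $\tau_j$, the Bergman metric of $\Omega\cap B$ inherits bounded geometry near $z_0$. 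Granting this, $(\star)$ follows from a Cauchy estimate: for $f$ with $f(z)=df(z)=0$ one has, in the $g$-normalized frame, $\abs{\partial^\alpha f(z)}\le C\sup_{Q(z)}\abs f$ over the anisotropic polydisc $Q(z)$ of radii $\tau_j(z)$, and the sub-mean-value property of $\abs f^2$ on a fixed dilate of $Q(z)$, together with $\mathrm{vol}\,Q(z)\asymp K_{\Omega\cap B}(z)^{-1}$, gives $\sup_{Q(z)}\abs f\le C\,K_{\Omega\cap B}(z)^{1/2}\|f\|$. With the localization step this yields $(\star)$ on $\Omega$, whence the bisectional curvatures of the Bergman metric of $\Omega$ are bounded below near $z_0$; the holomorphic sectional curvatures (the case $Y=X$) and the Ricci curvatures (a finite trace of the bisectional tensor in a $g$-orthonormal frame) then also have a lower bound. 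The chief obstacle is the transfer of bounded geometry from $M$ to $\Omega\cap B$ — above all, showing that the degenerate $w''$-directions neither pinch the domain nor make the $\tau_j$ vary quickly — with a secondary difficulty in pinning down the curvature identity of the first paragraph, so that the only quantities that can degenerate are the nonnegative ones collected in $\Theta(z)$.
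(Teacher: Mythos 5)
Your first two steps are sound and in fact coincide with the paper's route in different clothing: with $X,Y$ normalized to be $g$-unit, your quantity $\Theta$ is exactly the Bergman--Fuchs--Pagano ratio $I^1_\Omega(p;X)I^1_\Omega(p;Y)/\bigl(I^0_\Omega(p)I^2_\Omega(p;X,Y)\bigr)$ (with equality, not just an inequality $R\ge 2-C\Theta$), and the monotonicity of $\|P_1h^\alpha_z\|$ under shrinking the domain plus a localization of $K$ and $g$ reduces everything to $\Omega\cap B$. (Minor caveat: the ratio $K_\Omega/K_{\Omega\cap B}\to 1$ is not available here without a local holomorphic peak function; one only gets two-sided bounds with uniform constants, which is all you need.) The final Cauchy/sub-mean-value computation on the anisotropic polydisc is likewise the same mechanism the paper uses when it compares the minimum integrals of the localized domain with those of the polydisc $Q_{\delta(p),c}$ and evaluates the bisectional curvature of a polydisc at its center.

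The genuine gap is the step you yourself flag as the chief obstacle: the transfer of ``bounded Bergman geometry'' from the product model $\Sigma\times P$ to $\Omega\cap B$. Your argument needs the two-sided comparisons $K_{\Omega\cap B}(z)\asymp\prod_j\tau_j(z)^{-2}$ and $g_{\Omega\cap B}(z;X)\asymp\sum_j|X_j|^2/\tau_j(z)^2$ with constants uniform as $z\to z_0$; the upper bound for $K$ and the Cauchy estimate are elementary, but the lower bound for $K$ and the upper bound for $g$ (equivalently, the uniform upper bounds for $I^0$ and $I^1$ that normalize your frame) do \emph{not} follow from the fact that the defining function of $\Omega\cap B$ osculates that of the model to the order controlling the $\tau_j$. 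Bergman kernels are not stable under finite-order perturbation of the boundary, and no mechanism is offered for the inheritance. This is precisely the point where the paper injects real analytic input: Fu's family of uniformly bounded plurisubharmonic functions $g_{q,\delta}$ on the centered domains, whose complex Hessians are bounded below by $|\xi_1|^2/\delta^2+\sum|\xi_j|^2/\delta+\sum|\xi_k|^2$ on the anisotropic polydisc, fed into Catlin's H\"ormander-type theorem to produce exactly the two-sided estimates for $K$ and $g$ that you are assuming. Without constructing such weights (or an equivalent $L^2$ device), the claim that $\mathrm{vol}\,Q(z)\asymp K_{\Omega\cap B}(z)^{-1}$ and that the $\tau_j$ are slowly varying is unsupported, and the proof does not close. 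A secondary, repairable imprecision: your normal form assertion that $R$ ``contributes nothing to the Levi form in the $w''$-directions'' is essentially Fu's centering proposition, but it requires the constant-rank hypothesis on a full boundary neighborhood and smooth dependence of the normalization on the base point, both of which you would need to state and use to get constants independent of $z$.
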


Notice that if the Levi form is identically zero, then we cannot guarantee the regularity of the kernel function on the boundary since the pseudo-local property of Neumann operator does not hold (cf. \cite{Kerzman72}, \cite{Kohn72}).

The idea of the proof of Theorem \ref{main} is as follows: since the bisectional curvatures can be represented by Bergman's minimum integrals (Theorem \ref{bf}), we only need to get lower and upper bound estimates of the minimum integrals. Lower bounds are easily obtained by the minimum integrals of a subdomain. To achive an upper bound, we apply H\"{o}rmander's $L^2$-estimates of $\ov\partial$ with a plurisubharmonic function with large Hessians on the subdomain, which is constucted by Fu in \cite{Fu14}. The same argument with the result of Catlin \cite{Catlin89} gives an answer of Problem 26 in \cite{KSCV10}. Our result is an improvement upon the results of \cite{McNeal89} in this respect, since the holomorphic sectional curvatures does not control the bisectional curvatures and Ricci curvatures in general.

This paper is organized as follows: First, we briefly review fundamentals of Bergman geometry including Bergman's minimum integrals and applications of H\"{o}rmander's $L^2$-estimates of $\ov\partial$ to the minimum integrals. In section 4, we review the result of Fu \cite{Fu14} on the construction of a family of plurisubharmonic functions with large Hessians. Then we give a proof of Theorem \ref{main} with detail. In the last section, we show that our method is applicable to other pseudoconvex domains beyond the case of constant Levi rank (Theorem \ref{abstract}).\\


\textit{Acknowledgements}: The author would like to express his deep gratitude to Professor Kang-Tae Kim for valuable guidance and encouragements. This work is part of author's Ph.D. dissertation at Pohang University of Science and Technology.



\section{Preliminaries}

Let $\Omega$ be a bounded domain in $\mathbb{C}^n$. Define the {\it Bergman space} 
$$
\mathcal{A}^2(\Omega):=L^2(\Omega)\cap\mathcal{H}(\Omega),
$$
where $\mathcal{H}(\Omega)$ is the space of holomorphic functions on $\Omega$ and $L^2(\Omega)$ is the space of square integrable functions on $\Omega$. Let $\{\phi_j\}_{j=0}^{\infty}$ be a complete orthonormal basis for $\mathcal{A}^2(\Omega)$. Then the {\it Bergman kernel} and {\it Bergman metric} for $\Omega$ are defined by
$$
K_{\Omega}(z,\ov{z}):=\sum_{j=0}^{\infty}\phi_j(z)\ov{\phi_j(z)},
$$
$$
g_{\Omega}(z;X):=\sum_{j,k=1}^n\frac{\partial^2 \log K_{\Omega}(z,\ov{z})}{\partial z_j \partial \ov{z_k}}X_j\ov{X_k},
$$
where $z\in\Omega$ and $X=\sum_{i=1}^nX_i\frac{\partial}{\partial z_i}\in T_z^{1,0}(\Omega)$.\\

Since the Bergman metric is a K\"{a}hler metric, the {\it bisectional curvature} $B_{\Omega}(z;X,Y)$ at $z$ along the directions $X$ and $Y$ is defined by
$$
B_{\Omega}(z;X,Y)=\frac{ R_{\ov{h}jk\ov{l}} \ov{X_h}X_jY_k\ov{Y_l} }{g_{j\ov{k}} X_j\ov{X_k}g_{l\ov{m}}Y_l\ov{Y_m}},
$$
where
$$
R_{\ov{h}jk\ov{l}}=-\frac{\partial^2 g_{j\ov{h}}}{\partial z_k \partial \ov{z_l}}+g^{\nu\ov{\mu}}\frac{\partial g_{j\ov{\mu}}}{\partial z_k}\frac{\partial g_{\nu\ov{h}}}{\partial \ov{z_l}}.
$$
Here, we used the Einstein convention and $g^{\nu\ov{\mu}}$ denotes the components of the inverse matrix of $(g_{j\ov{k}})=(\frac{\partial^2 \log K_{\Omega}(z,\ov{z})}{\partial z_j \partial \ov{z_k}})$. The {\it holomorphic sectional curvature} $H_{\Omega}(z;X)$ and {\it Ricci curvature} $Ric_{\Omega}(z;X)$ at $z$ along the direction $X$ are given by
$$
H_{\Omega}(z;X)=B_{\Omega}(z;X,X),\ \ \ \ \ \ \ Ric_{\Omega}(z;X)=\sum_{j=1}^nB_{\Omega}(z;E^j,X),
$$
where $\{E^1,\ldots,E^n\}$ is a basis of $T_z^{1,0}(\Omega)$.\\

Given $p\in\Omega$ and $X=\sum_{i=1}^nX_i\frac{\partial}{\partial z_i}\big|_p, Y=\sum_{i=1}^nY_i\frac{\partial}{\partial z_i}\big|_p\in T_p^{1,0}(\Omega)$, consider the minimum integrals:\medskip
$$
I^0_{\Omega}(p):=\inf_{f\in \mathcal{A}^2(\Omega)}\{\|f\|_{L^2(\Omega)}^2: f(p)=1\},
$$
$$
I^1_{\Omega}(p;X):=\inf_{f\in \mathcal{A}^2(\Omega)}\{\|f\|_{L^2(\Omega)}^2: f(p)=0, \sum_{i=1}^nX_i\frac{\partial f}{\partial z_i}(p)=1\},
$$
$$
I^2_{\Omega}(p;X,Y):=\inf_{f\in \mathcal{A}^2(\Omega)}\{\|f\|_{L^2(\Omega)}^2: f(p)=0, df(p)=0, \sum_{j,k=1}^nX_jY_k\frac{\partial^2 f}{\partial z_j \partial z_k}(p)=1\}.
$$

It is easy to see from the definitions that if $p\in\Omega'\subset\Omega$, then
$$
I^0_{\Omega'}(p)\leq I^0_{\Omega}(p),\ \ \  I^1_{\Omega'}(p;X)\leq I^1_{\Omega}(p;X),\ \ \  I^2_{\Omega'}(p;X,Y)\leq I^2_{\Omega}(p;X,Y).
$$
If $f:\Omega_1\rightarrow\Omega_2$ is a biholomorphism, then the following transformation formula hold.
\begin{align*}
I^0_{\Omega_1}(p)|\det J_{\mathbb{C}}f(p)|^2&=I^0_{\Omega_2}(f(p)),\\
I^1_{\Omega_1}(p;X)|\det J_{\mathbb{C}}f(p)|^2&=I^1_{\Omega_2}(f(p);df(X)),\\
I^2_{\Omega_1}(p;X,Y)|\det J_{\mathbb{C}}f(p)|^2&=I^2_{\Omega_2}(f(p);df(X),df(Y)),
\end{align*}
where $J_{\mathbb{C}}f(p)$ denotes the complex Jacobian matrix of $f$ at $p$. Moreover, the following formula were proved in the 1930's by Bergman and Fuchs (see \cite{Bergman70}).

\begin{thm}[Bergman-Fuchs]\label{bf}
$$
K_{\Omega}(p,\ov{p})=\frac{1}{I^0_{\Omega}(p)},\ \  \ \ \ \  g_{\Omega}(p;X)=\frac{I^0_{\Omega}(p)}{I^1_{\Omega}(p;X)},
$$
$$
H_{\Omega}(p;X)=2-\frac{(I^1_{\Omega}(p;X))^2}{I^0_{\Omega}(p)I^2_{\Omega}(p;X,X)}.
$$
\end{thm}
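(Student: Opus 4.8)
The plan is to express each of the three minimum integrals as the reciprocal of a squared $L^2$-norm of an explicit extremal function, and then to read off the three identities from the expansion $K_\Omega(z,\overline{z})=\sum_j\phi_j(z)\overline{\phi_j(z)}$ in an orthonormal basis of $\mathcal{A}^2(\Omega)$ adapted to $p$ and $X$. The first identity is just the reproducing-kernel inequality: for $f\in\mathcal{A}^2(\Omega)$ one has $f(p)=\langle f,K_\Omega(\cdot,\overline{p})\rangle$, so $\abs{f(p)}^2\le\|f\|_{L^2(\Omega)}^2\,K_\Omega(p,\overline{p})$ by Cauchy--Schwarz, with equality precisely when $f$ is a scalar multiple of $K_\Omega(\cdot,\overline{p})$; minimizing over $f(p)=1$ gives $I^0_\Omega(p)=1/K_\Omega(p,\overline{p})$.

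For the remaining two identities I would build a complete orthonormal system $\{\phi_j\}_{j\ge0}$ by a chain of extremal problems. Choose $\phi_0$ to be a unit vector maximizing $\abs{f(p)}$, so that $\phi_j(p)=0$ for $j\ge1$ and $\abs{\phi_0(p)}^2=K_\Omega(p,\overline{p})$. On $H_1:=\{f\in\mathcal{A}^2(\Omega):f(p)=0\}$ let $\phi_1$ be the normalized Riesz representative of the bounded functional $\ell_1(f):=\sum_i X_i\partial_i f(p)$, and extend it to an orthonormal basis $\phi_1,\dots,\phi_n$ of $H_1\ominus H_2$, the orthogonal complement of $H_2:=\{f:f(p)=0,\ df(p)=0\}$ in $H_1$; this complement is exactly $n$-dimensional because the Bergman metric is nondegenerate. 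Finally let $\phi_{n+1}$ be the Riesz representative, taken inside $H_2$, of $\ell_2(f):=\sum_{j,k}X_jX_k\partial_j\partial_k f(p)$, normalized, and complete the system arbitrarily. Then $\ell_1(\phi_j)=0$ for $j\ge2$, every later basis element lies in $H_2$ and is orthogonal to $\phi_{n+1}$ (hence $d\phi_j(p)=0$ for $j\ge n+1$ and $\ell_2(\phi_j)=0$ for $j\ge n+2$), and the standard minimum-norm-with-one-linear-constraint argument gives
$$
I^0_\Omega(p)=\frac{1}{\abs{\phi_0(p)}^2},\qquad I^1_\Omega(p;X)=\frac{1}{\abs{\ell_1(\phi_1)}^2},\qquad I^2_\Omega(p;X,X)=\frac{1}{\abs{\ell_2(\phi_{n+1})}^2},
$$
each infimum attained by the corresponding basis vector up to a scalar.

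It remains to compute the right-hand sides. Differentiate $\sum_j\phi_j(z)\overline{\phi_j(z)}$ at $p$ and contract every holomorphic derivative index against $X$ and every anti-holomorphic one against $\overline{X}$ (a subscript $X$ or $\overline{X}$ below denotes such a contraction). The vanishing properties collapse the sums: only $\phi_0$ survives in $K_X(p)$ and $K_{XX}(p)$; only $\phi_0,\phi_1$ in $K_{X\overline{X}}(p)$ and $K_{XX\overline{X}}(p)$; only $\phi_0,\dots,\phi_{n+1}$ in $K_{XX\overline{X}\,\overline{X}}(p)$; and the metric matrix becomes the Gram-type expression $g_{i\overline{j}}(p)=\abs{\phi_0(p)}^{-2}\sum_{m=1}^n\partial_i\phi_m(p)\,\overline{\partial_j\phi_m(p)}$ with $\big(\partial_i\phi_m(p)\big)_{i,m=1}^n$ invertible. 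Substituting into $g_\Omega(p;X)=K_{X\overline{X}}/K-\abs{K_X}^2/K^2$ produces a two-term cancellation and leaves $g_\Omega(p;X)=\abs{\ell_1(\phi_1)}^2/\abs{\phi_0(p)}^2=I^0_\Omega(p)/I^1_\Omega(p;X)$. For the curvature one uses $H_\Omega(p;X)=g_\Omega(p;X)^{-2}\big(-\psi_{XX\overline{X}\,\overline{X}}+g^{\nu\overline{\mu}}\psi_{XX\overline{\mu}}\psi_{\nu\overline{X}\,\overline{X}}\big)$ with $\psi:=\log K_\Omega$, which is what the formula for $R_{\overline{h}jk\overline{l}}$ becomes after contraction; substituting again, $\psi_{XX\overline{\mu}}=\abs{\phi_0(p)}^{-2}\sum_{m=1}^n\widetilde{c}_m\,\overline{\partial_\mu\phi_m(p)}$ for scalars $\widetilde{c}_m$ (with $\widetilde{c}_1$ equal to $\ell_2(\phi_1)$ corrected by a first-order term). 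Since $g_{i\overline{j}}(p)$ is the Gram matrix of the invertible family $\{\partial_i\phi_m(p)\}_{m=1}^n$, the inverse-metric contraction collapses to $\abs{\phi_0(p)}^{-2}\sum_{m=1}^n\abs{\widetilde{c}_m}^2$, and all terms carrying $\partial_i\phi_0(p)$ cancel against matching pieces of $\psi_{XX\overline{X}\,\overline{X}}$. What survives equals $2\,g_\Omega(p;X)^2-\abs{\ell_2(\phi_{n+1})}^2/\abs{\phi_0(p)}^2$, so dividing by $g_\Omega(p;X)^2$ gives $H_\Omega(p;X)=2-\abs{\ell_2(\phi_{n+1})}^2/\big(\abs{\phi_0(p)}^2 g_\Omega(p;X)^2\big)=2-(I^1_\Omega(p;X))^2/\big(I^0_\Omega(p)\,I^2_\Omega(p;X,X)\big)$.

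The one delicate point is this fourth-order computation: it has to be arranged so that the inverse-metric contraction and the several cross-terms involving $\partial\phi_0(p)$ cancel, and that cancellation relies on keeping the conjugations in $g^{\nu\overline{\mu}}$ straight and on the finiteness of the family of gradients $\{\partial_i\phi_m(p)\}$, i.e.\ on $\dim\big(H_1\ominus H_2\big)=n$. Everything else is mechanical; if desired one may first use the biholomorphic transformation rules recorded above to normalize $p=0$ and $\phi_0(p)>0$, but this is not essential.
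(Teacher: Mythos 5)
The paper does not prove this statement at all: it records the identities as classical facts of Bergman and Fuchs and points to Bergman's book, so there is no in-paper argument to compare against. Your proposal is the standard classical proof, and the parts you actually execute are correct: the reproducing-kernel/Cauchy--Schwarz argument for $I^0_\Omega(p)=1/K_\Omega(p,\overline{p})$; the decomposition $\mathcal{A}^2(\Omega)=\mathbb{C}\phi_0\oplus(H_1\ominus H_2)\oplus H_2$ with $\dim(H_1\ominus H_2)=n$ (which does follow from positive definiteness of the Bergman metric on a bounded domain); the Riesz-representative characterizations $I^1=1/\lvert\ell_1(\phi_1)\rvert^2$ and $I^2=1/\lvert\ell_2(\phi_{n+1})\rvert^2$; the collapse of $K_{X\overline{X}}$, $K_{XX\overline{X}}$, $K_{XX\overline{X}\overline{X}}$ to finitely many terms; the Gram form of $g_{i\overline{j}}(p)$; and the resulting identity $g_\Omega(p;X)=\lvert\ell_1(\phi_1)\rvert^2/\lvert\phi_0(p)\rvert^2=I^0/I^1$. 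I also checked that your formula $\psi_{XX\overline{\mu}}=\lvert\phi_0(p)\rvert^{-2}\sum_{m=1}^n\widetilde{c}_m\,\overline{\partial_\mu\phi_m(p)}$ with $\widetilde{c}_1=\ell_2(\phi_1)-2(\partial_X\phi_0)(\partial_X\phi_1)/\phi_0$ and $\widetilde{c}_m=\ell_2(\phi_m)$ for $m\geq 2$ is right, so the inverse-metric contraction does collapse to $\lvert\phi_0(p)\rvert^{-2}\sum_m\lvert\widetilde{c}_m\rvert^2$ as you claim.

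The one place your write-up falls short of a proof is exactly the place you flag: the identity $-\psi_{XX\overline{X}\overline{X}}+g^{\nu\overline{\mu}}\psi_{XX\overline{\mu}}\psi_{\nu\overline{X}\overline{X}}=2g_\Omega(p;X)^2-\lvert\ell_2(\phi_{n+1})\rvert^2/\lvert\phi_0(p)\rvert^2$ is asserted, not derived. This is not a wrong step --- it checks out, e.g., on the unit disc, where $g=2$, $\lvert\ell_2(\phi_2)\rvert^2/\lvert\phi_0(0)\rvert^2=12$, and the curvature numerator is $-4=2\cdot 4-12$ --- but it is the entire computational content of the third identity: the expansion of $\psi_{XX\overline{X}\overline{X}}$ has terms at orders $K^{-1}$ through $K^{-4}$, and the "$2$" arises from $2(K_{X\overline{X}})^2/K^2$ only after the cross-terms in $\partial_X\phi_0(p)$ (which is not zero in general and cannot be normalized away) cancel against the corresponding pieces of the $K^{-3}$ and $K^{-4}$ terms and against the $\widetilde{c}_1$ correction. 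A complete write-up must either carry this expansion out or replace it by the equivalent classical determinant formulas expressing $I^0,I^1,I^2$ as ratios of Gram determinants of $K$ and its derivatives, which package the same cancellation once and for all.
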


Later, Pagano observed that the bisectional curvature satisfies the following polarized identity (cf. \cite{KK03}).
$$
B_{\Omega}(p;X,Y)=2-\frac{I^1_{\Omega}(p;X)I^1_{\Omega}(p;Y)}{I^0_{\Omega}(p)I^2_{\Omega}(p;X,Y)}.
$$


\section{H\"{o}rmander's $L^2$ estimates and applications}

In this section, we consider two applications of the following theorem:

\begin{thm}[H\"{o}rmander \cite{Hormander65}]\label{hormander}
Let $\Omega$ be a bounded pseudoconvex domain in $\mathbb{C}^n$, let $\phi$ be a plurisubharmonic function in $\Omega$ and $c\in C(\Omega)$ be a positive function such that
$$
\sum^n_{j,k=1}\frac{\partial^2\phi(z)}{\partial z_j\partial \ov{z_k}}X_j\ov{X_k}\geq c\sum^n_{j=1}|X_j|^2,
$$
for all $p\in\Omega$ and $X\in\mathbb{C}^n$. For every $f\in L^2_{(0,1)}(\Omega,\phi)$ satisfying $\ov\partial f=0$, there is a function $u\in L^2(\Omega,\phi)$ such that $\ov\partial u=f$ and
$$
\int_{\Omega}|u|^2e^{-\phi}\leq\int_{\Omega}\frac{|f|^2}{c}e^{-\phi},
$$
provided that the right hand side is finite.
\end{thm}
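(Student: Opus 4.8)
The plan is to prove the theorem by the standard Hilbert-space method for the $\ov\partial$-complex on weighted spaces, reducing existence with an $L^2$-bound to a single a priori inequality. Introduce the densely defined closed operators
$$
T=\ov\partial\colon L^2(\Omega,\phi)\to L^2_{(0,1)}(\Omega,\phi),\qquad S=\ov\partial\colon L^2_{(0,1)}(\Omega,\phi)\to L^2_{(0,2)}(\Omega,\phi),
$$
and let $T^{*}=\ov\partial^{*}_\phi$ denote the Hilbert-space adjoint of $T$. The datum satisfies $\ov\partial f=0$, i.e. $f\in\ker S$, and $S\circ T=0$ gives $\overline{\mathrm{Range}\,T}\subseteq\ker S$. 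By the Riesz representation theorem, producing $u$ with $\ov\partial u=f$ and a prescribed bound on $\|u\|_\phi$ is equivalent to bounding the linear functional $T^{*}\alpha\mapsto(f,\alpha)_\phi$ on $\mathrm{Range}\,T^{*}$; thus everything reduces to an estimate of the form $\abs{(f,\alpha)_\phi}^2\le(\mathrm{const})\,\|T^{*}\alpha\|^2_\phi$.

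The core is the Morrey--Kohn--H\"ormander identity. For a smooth $(0,1)$-form $\alpha=\sum_j\alpha_j\,d\ov{z_j}$ in $\mathrm{Dom}(\ov\partial^{*}_\phi)$, integrating by parts in the weighted inner products yields
$$
\|\ov\partial\alpha\|^2_\phi+\|\ov\partial^{*}_\phi\alpha\|^2_\phi=\sum_{j,k}\int_\Omega\left|\frac{\partial\alpha_j}{\partial\ov{z_k}}\right|^2 e^{-\phi}+\sum_{j,k}\int_\Omega\frac{\partial^2\phi}{\partial z_j\partial\ov{z_k}}\alpha_j\ov{\alpha_k}\,e^{-\phi}+\mathcal{B}(\alpha),
$$
where $\mathcal{B}(\alpha)$ is a boundary integral of the Levi form of $\Omega$. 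For a pseudoconvex domain this boundary term is nonnegative, and the first sum is manifestly nonnegative; discarding both and invoking the hypothesis $\sum_{j,k}\frac{\partial^2\phi}{\partial z_j\partial\ov{z_k}}X_j\ov{X_k}\ge c\sum_j\abs{X_j}^2$ gives the basic estimate
$$
\int_\Omega c\,\abs{\alpha}^2 e^{-\phi}\le\|\ov\partial\alpha\|^2_\phi+\|\ov\partial^{*}_\phi\alpha\|^2_\phi.
$$

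To pass from this to the sharp constant I would use the orthogonal decomposition $\alpha=\alpha_1+\alpha_2$ with $\alpha_1\in\ker S$ and $\alpha_2\perp\ker S$. Since $\overline{\mathrm{Range}\,T}\subseteq\ker S$ forces $\alpha_2\in\ker T^{*}$, we have $T^{*}\alpha=T^{*}\alpha_1$, and $f\in\ker S$ gives $(f,\alpha)_\phi=(f,\alpha_1)_\phi$. A weighted Cauchy--Schwarz inequality then yields
$$
\abs{(f,\alpha)_\phi}^2=\abs{(f,\alpha_1)_\phi}^2\le\Big(\int_\Omega\frac{\abs{f}^2}{c}e^{-\phi}\Big)\Big(\int_\Omega c\,\abs{\alpha_1}^2 e^{-\phi}\Big)\le\Big(\int_\Omega\frac{\abs{f}^2}{c}e^{-\phi}\Big)\,\|T^{*}\alpha\|^2_\phi,
$$
the last step being the basic estimate applied to $\alpha_1\in\ker S$, for which only the $T^{*}$-term survives. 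The Riesz representation of $T^{*}\alpha\mapsto(f,\alpha)_\phi$ then produces $u$ with $\ov\partial u=f$ and $\|u\|^2_\phi\le\int_\Omega\frac{\abs{f}^2}{c}e^{-\phi}$, as claimed.

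The main obstacle is not this algebra but the rigor behind the basic estimate, since $\Omega$ is only assumed bounded and pseudoconvex with no boundary regularity. The identity above is proved for smooth forms satisfying the $\ov\partial^{*}_\phi$ boundary condition, so I must establish that such forms are dense, in the graph norm, in $\mathrm{Dom}(\ov\partial)\cap\mathrm{Dom}(\ov\partial^{*}_\phi)$. I would handle this either by H\"ormander's device of composing a smooth plurisubharmonic exhaustion of $\Omega$ with a rapidly increasing convex function to tame behavior near $\partial\Omega$, combined with Friedrichs' mollifiers, or by exhausting $\Omega$ by smooth bounded pseudoconvex subdomains $\Omega_\nu$ (regular sublevel sets of a smooth plurisubharmonic exhaustion, pseudoconvex by construction and smooth by Sard), proving the estimate with a uniform constant on each $\Omega_\nu$ where $c$ is bounded below, solving $\ov\partial u_\nu=f$ there, and extracting a weak $L^2$-limit. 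In both routes the decisive use of pseudoconvexity is exactly the nonnegativity of the discarded boundary Levi term $\mathcal{B}(\alpha)$.
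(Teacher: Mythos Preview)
Your proposal is a correct outline of the standard Hilbert-space proof of H\"ormander's $L^2$ estimate: the Morrey--Kohn--H\"ormander identity, the nonnegativity of the boundary Levi term under pseudoconvexity, the weighted Cauchy--Schwarz combined with the decomposition $\alpha=\alpha_1+\alpha_2$ to obtain the sharp constant, and the density/exhaustion considerations needed when $\partial\Omega$ has no assumed regularity. There is no genuine gap.

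However, there is nothing to compare it against: the paper does not prove this theorem at all. Theorem~\ref{hormander} is stated as a quoted result from \cite{Hormander65} and is used as a black box in the subsequent arguments (the localization Theorem~\ref{local} and Catlin's Theorem~\ref{catlin}). So your write-up supplies a proof where the paper simply cites one; it is consistent with the original source, but the paper itself offers no approach of its own for this statement.
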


As the first application of Theorem \ref{hormander}, we introduce the following theorem on localization of the minimum integrals. The inequalities in (1) and (2) were proved in \cite{Ohsawa80} and \cite{DFH84}. Although the proof of (3) is almost identical, we prove this for the convenience.

\begin{thm}\label{local}
Let $\Omega$ be a bounded pseudoconvex domain in $\mathbb{C}^n$ and $z_0\in\partial\Omega$. Suppose $W\subset\subset U$ are small open neighborhoods of $z_0$. Then there exist positive constants $C_0,C_1,$ and $C_2$ such that
\begin{enumerate}
	\item $I^0_{U\cap\Omega}(p)\leq I^0_{\Omega}(p)\leq C_0I^0_{U\cap\Omega}(p)$,\medskip
	\item $I^1_{U\cap\Omega}(p;X)\leq I^1_{\Omega}(p;X)\leq C_1I^1_{U\cap\Omega}(p;X)$,\medskip
	\item $I^2_{U\cap\Omega}(p;X,Y)\leq I^2_{\Omega}(p;X,Y)\leq C_2I^2_{U\cap\Omega}(p;X,Y)$,
\end{enumerate}
for all $p\in W\cap\Omega$ and $X\in\mathbb{C}^n$.
\end{thm}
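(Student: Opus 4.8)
The left-hand inequalities in (1)--(3) are immediate from the monotonicity of the minimum integrals under inclusion $W\cap\Omega\subset U\cap\Omega\subset\Omega$, so the content is entirely in the right-hand inequalities; I shall concentrate on (3), the others being recorded in \cite{Ohsawa80,DFH84}. The strategy is to take a near-optimal competitor $f$ for $I^2_{U\cap\Omega}(p;X,Y)$ and produce from it a global holomorphic function on $\Omega$ with the same $2$-jet data at $p$ and comparable $L^2$ norm, using H\"{o}rmander's theorem (Theorem \ref{hormander}) to correct the $\ov\partial$-error coming from a cutoff.

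First I would fix a cutoff function $\chi\in C_c^\infty(U)$ with $\chi\equiv 1$ on a neighborhood of $\ov W$, so that for $p\in W\cap\Omega$ the product $\chi f$ agrees with $f$ near $p$ and hence has the prescribed vanishing of $f$ and $df$ at $p$ together with $\sum X_jY_k\,\partial^2(\chi f)/\partial z_j\partial z_k(p)=1$. The obstruction is that $\chi f$ is not holomorphic: $\ov\partial(\chi f)=f\,\ov\partial\chi$, a $(0,1)$-form supported in $(U\setminus W)\cap\Omega$, where $f$ is square-integrable. To solve $\ov\partial u=f\,\ov\partial\chi$ on $\Omega$ with good weighted estimates and \emph{without destroying the $2$-jet of $\chi f$ at $p$}, I would introduce the weight $\phi(z)=\psi(z)+2n\,\varepsilon|z|^2+(2n+2)\log|z-p|$, where $\psi$ is a bounded plurisubharmonic exhaustion-type function (e.g.\ a constant, or $|z|^2$ scaled) and $\varepsilon>0$ is chosen so that the Hessian of $\phi$ dominates $c=\varepsilon$ times the identity; the singular term $(2n+2)\log|z-p|$ forces any $L^2(\Omega,\phi)$ solution $u$ to vanish to order $\ge 2$ at $p$, hence $u(p)=0$, $du(p)=0$, and the second derivatives of $u$ at $p$ vanish as well, so $F:=\chi f-u$ is holomorphic on $\Omega$, lies in $\mathcal{A}^2(\Omega)$, and has exactly the same admissible $2$-jet at $p$ as $\chi f$. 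Applying Theorem \ref{hormander} gives
\[
\int_\Omega |u|^2 e^{-\phi}\le \frac{1}{\varepsilon}\int_\Omega |f\,\ov\partial\chi|^2 e^{-\phi}
\le \frac{C}{\varepsilon}\int_{(U\setminus W)\cap\Omega}|f|^2 e^{-\phi},
\]
and since $\ov\partial\chi$ is supported away from $p$, the weight $e^{-\phi}$ is bounded above and below by constants (depending only on $W,U,p$, uniformly for $p$ in a compact subset) on that support, whence $\|u\|_{L^2(\Omega)}^2\le C'\,\|f\|_{L^2((U\setminus W)\cap\Omega)}^2\le C'\,\|f\|_{L^2(U\cap\Omega)}^2$. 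Combining, $\|F\|_{L^2(\Omega)}^2\le 2\|\chi f\|_{L^2}^2+2\|u\|_{L^2}^2\le C_2\,\|f\|_{L^2(U\cap\Omega)}^2$, and taking the infimum over admissible $f$ yields $I^2_\Omega(p;X,Y)\le C_2\,I^2_{U\cap\Omega}(p;X,Y)$.

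The one genuinely delicate point is the \emph{uniformity} of the constant $C_2$ over $p\in W\cap\Omega$: the weight $\log|z-p|$ varies with $p$, so I must check that the comparability constants for $e^{-\phi}$ on $\mathrm{supp}\,\ov\partial\chi$ and the lower bound on the Hessian of $\phi$ can be chosen independent of $p$. This is where $W\subset\subset U$ is used crucially --- $\mathrm{dist}(\ov W,\partial U)>0$ gives a uniform positive lower bound on $|z-p|$ for $z\in\mathrm{supp}\,\ov\partial\chi$ and $p\in W$, and a uniform upper bound on $\mathrm{diam}$ controls it from above --- so the estimates are uniform on $W\cap\Omega$ as claimed. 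Cases (1) and (2) are the same argument with the singular weight $(2n+2)\log|z-p|$ replaced by $2n\log|z-p|$ and $(2n+1)\log|z-p|$ respectively, to kill the $0$-jet and $1$-jet of $u$ at $p$; I would simply remark that these are \cite{Ohsawa80,DFH84} and that (3) follows mutatis mutandis.
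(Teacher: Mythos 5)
Your overall strategy is exactly the paper's: multiply a near-minimizer $f$ for $I^2_{U\cap\Omega}$ by a cutoff, solve $\ov\partial u=f\,\ov\partial\chi$ via Theorem \ref{hormander} with a weight having a logarithmic pole at $p$ so that $u$ loses its low-order jet, and take $F=\chi f-u$. However, there is a concrete quantitative error in the choice of weight: the singular term $(2n+2)\log|z-p|$ is too weak to do what you claim. Since $u$ is holomorphic near $p$ (as $\ov\partial\chi$ vanishes there), finiteness of $\int|u|^2|z-p|^{-2k}$ kills exactly those Taylor terms of $u$ at $p$ of homogeneous degree $m$ with $\int_0^r s^{2m-2k+2n-1}\,ds=\infty$, i.e.\ $m\leq k-n$. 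With your choice $2k=2n+2$, i.e.\ $k=n+1$, only the terms with $m\leq 1$ are forced to vanish: you get $u(p)=0$ and $du(p)=0$, but a nonzero degree-$2$ term contributes $\int_0^r s^{4-(2n+2)+2n-1}\,ds=\int_0^r s\,ds<\infty$ and survives. Hence your assertion that ``the second derivatives of $u$ at $p$ vanish as well'' does not follow, and $F=\chi f-u$ need not satisfy the normalization $\sum X_jY_k\,\partial^2F/\partial z_j\partial z_k(p)=1$, which breaks the competitor argument for $I^2_\Omega$. The paper uses $2(n+2)\log|z-p|$, i.e.\ $k=n+2$, which kills all terms of degree $m\leq 2$ and is the correct exponent. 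The same off-by-one occurs in your remark on case (2): the weight $(2n+1)\log|z-p|$ only forces $u(p)=0$, not $du(p)=0$; you need $2(n+1)\log|z-p|$ there. In short: each additional order of vanishing costs an extra $2\log|z-p|$ in the weight, not an extra $1$.

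Everything else in your write-up is sound and matches the paper: the left-hand inequalities are indeed just monotonicity; the smooth positive term added to the weight (your $\varepsilon|z|^2$, the paper's $\log(1+|z|^2)$) supplies the uniform Hessian lower bound on the bounded domain $\Omega$; and your discussion of uniformity in $p$ --- using $\operatorname{dist}(\ov W,\partial U)>0$ to bound $|z-p|$ from below on $\operatorname{supp}\ov\partial\chi$ so that the constants are independent of $p\in W\cap\Omega$ --- is exactly the point that makes the localization usable in the proof of Theorem \ref{main}. With the exponent corrected to $2(n+2)\log|z-p|$ the argument goes through as you describe.
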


\begin{proof}
Let $f$ be a holomorphic function on $U\cap\Omega$ such that 
$$
f(p)=0,\ \ df(p)=0,\ \  \sum_{j,k=1}^nX_jY_k\frac{\partial^2 f}{\partial z_j \partial z_k}(p)=1, {\ \ \rm and\ \ } \|f\|^2_{L^2(U\cap\Omega)}=I^2_{U\cap\Omega}(p;X,Y).
$$ 
Fix another open set $U_0$, $W\subset\subset U_0\subset\subset U$ and a cut-off function $\chi\in C_0^{\infty}(U), 0\leq\chi\leq 1, \chi|_{U_0}\equiv 1$. Put $\phi(z)=2(n+2)\log|z-p|+\log(1+|z|^2)$, and set $\alpha=\ov\partial(\chi f)=f\ov\partial\chi$. Applying Theorem \ref{hormander}, one gets a solution $u$ to $\ov\partial u=\alpha$ such that
$$
\int_{\Omega}\frac{|u|^2}{|z-p|^{2n+4}(1+|z|^2)^2}\leq\int_{\Omega\cap(U-U_0)}\frac{|\alpha|^2}{|z-p|^{2n+4}}.
$$
Since the right hand side is bounded, $u(p)=du(p)=\frac{\partial^2 u}{\partial z_j \partial z_k}(p)=0$. Moreover, the above inequality implies that $\|u\|^2_{L^2(\Omega)}\lesssim\|f\|^2_{L^2(U\cap\Omega)}.$
Now define a holomorphic function $F=\chi f-u$ on $\Omega$. Then $\|F\|^2_{L^2(\Omega)}\lesssim \|f\|^2_{L^2(U\cap\Omega)}$, and
$$
F(p)=f(p)=0,\ \ dF(p)=df(p)=0, {\ \ \rm and\ \ } \sum_{j,k=1}^nX_jY_k\frac{\partial^2 F}{\partial z_j \partial z_k}(p)=1.
$$
This concludes $I^2_{\Omega}(p;X,Y)\lesssim I^2_{U\cap\Omega}(p;X,Y)$.
\end{proof}

\begin{rmk}
In \cite{Hormander65} and \cite{KY96}, on the additional assumption that $z_0$ admits a holomorphic local peak function, they proved the following:
$$
\lim_{U\cap\Omega\ni z\rightarrow z_0}\frac{I^j_{U\cap\Omega}(z;X,Y)}{I^j_{\Omega}(z;X,Y)}=1,
$$
for $j=0,1,2$ and for any neighborhood $U$ of $z_0$.
\end{rmk}

\begin{rmk}
The right side inequality in (3) is not necessary in the proof of Theorem \ref{main}. However, we include it because of its possible usefulness for an upper bound estimates of the curvatures of the Bergman metric.
\end{rmk}

Catlin proved the following theorem by a slight modification of the proof of Theorem \ref{hormander}.

\begin{thm}[Catlin \cite{Catlin89}]\label{catlin}
Let $\Omega$ be a bounded pseudoconvex domain with smooth boundary in $\mathbb{C}^n$ and $p\in\Omega$. Let $\beta_1,\ldots,\beta_n$ be given positive numbers. Assume that there exists a function $\phi\in C^3(\ov{\Omega})$ and positive constants $M,\widetilde{C},C_{\alpha}$ such that
\begin{enumerate}
	\item $|\phi(z)|\leq M$, for $z\in\Omega$;
	\item $\phi$ is plurisubharmonic in $\Omega$;
	\item $P:=P(p,\beta)=\{z\in\mathbb{C}^n; |z_j-p_j|<\beta_j, 1\leq j\leq n\}\subset\Omega$;
	\item for all $z\in P$ and $X\in\mathbb{C}^n$,
	$$
	\sum^n_{j,k=1}\frac{\partial^2\phi(z)}{\partial z_j\partial \ov{z_k}}X_j\ov{X_k}\geq \widetilde{C} \sum^n_{j=1}\frac{|X_j|^2}{\beta^2_j};
	$$
	\item for all $\alpha=(\alpha_1,\ldots,\alpha_n)$ with $|\alpha|=\sum_j\alpha_j\leq 3$ and $z\in P$,
	$$
	|D^{\alpha}\phi(z)|\leq C_{\alpha} \prod^n_{j=1} \beta_j^{-\alpha_j},
	$$
	where $D^{\alpha}=D_1^{\alpha_1}\cdots D_n^{\alpha_n}$.
\end{enumerate}
Then there exists a positive constant $C$, only depends on the constants $M,\widetilde{C},C_{\alpha}$, such that
$$
C^{-1}\prod^n_{j=1}\frac{1}{\beta_j^2}\leq K_{\Omega}(p,\ov{p})\leq C\prod^n_{j=1}\frac{1}{\beta_j^2},
$$
and
$$
C^{-1}\sum^n_{j=1}\frac{|X_j|^2}{\beta_j^2}\leq g_{\Omega}(p;X)\leq C\sum^n_{j=1}\frac{|X_j|^2}{\beta_j^2}.
$$
\end{thm}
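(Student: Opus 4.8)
The plan is to reduce the curvature statement to the minimum integrals of Bergman and Fuchs and to estimate those. Translate $p$ to the origin and apply the dilation $T(z)=(z_1/\beta_1,\dots,z_n/\beta_n)$. By the transformation formulas for $I^0,I^1$ recalled above together with Theorem~\ref{bf}, one has $K_\Omega(p,\ov p)=K_{\widetilde\Omega}(0,\ov 0)\prod_j\beta_j^{-2}$ and $g_\Omega(p;X)=g_{\widetilde\Omega}(0;Y)$ with $Y=TX$ and $\sum_j|Y_j|^2=\sum_j|X_j|^2/\beta_j^2$, where $\widetilde\Omega:=T(\Omega)$ is a bounded pseudoconvex domain containing the unit polydisc $\Delta^n=T(P)$, and $\widetilde\phi:=\phi\circ T^{-1}$ is plurisubharmonic with $|\widetilde\phi|\le M$, with complex Hessian $\ge\widetilde C\sum_j|X_j|^2$ on $\Delta^n$, and with $|D^\alpha\widetilde\phi|\le C_\alpha$ on $\Delta^n$ for $|\alpha|\le 3$. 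Thus it suffices to prove
$$
I^0_{\widetilde\Omega}(0)\asymp 1,\qquad I^1_{\widetilde\Omega}(0;Y)\asymp \Big(\textstyle\sum_j|Y_j|^2\Big)^{-1},
$$
with all implied constants depending only on $n,M,\widetilde C,C_\alpha$; by Theorem~\ref{bf} this delivers both two-sided bounds in the statement.

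Two of the four needed inequalities are immediate from the monotonicity of the minimum integrals under inclusion: since $\Delta^n\subset\widetilde\Omega$ we get $I^0_{\widetilde\Omega}(0)\ge I^0_{\Delta^n}(0)=\pi^n$ and $I^1_{\widetilde\Omega}(0;Y)\ge I^1_{\Delta^n}(0;Y)$, the latter being computed explicitly on the polydisc (the extremal function is a linear functional), so $I^1_{\Delta^n}(0;Y)\asymp(\sum_j|Y_j|^2)^{-1}$. Via Theorem~\ref{bf} these give at once the upper bound for $K_\Omega(p,\ov p)$, and they will combine with the estimates of the next step to yield the two-sided bound for $g_\Omega(p;X)$.

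For the remaining two inequalities, $I^0_{\widetilde\Omega}(0)\le C$ and $I^1_{\widetilde\Omega}(0;Y)\le C(\sum_j|Y_j|^2)^{-1}$, I would produce admissible competitors by the $\ov\partial$-construction used in the proof of Theorem~\ref{local}. Fix $\chi\in C^\infty_0(\Delta^n)$ with $\chi\equiv 1$ near $0$, and take as holomorphic datum the constant $1$ (for $I^0$) or the linear function $L(z)=(\sum_j|Y_j|^2)^{-1}\sum_j\ov{Y_j}z_j$, which satisfies $L(0)=0$ and $\sum_jY_j\partial_jL(0)=1$ (for $I^1$). Since $L$ is holomorphic, $\ov\partial(\chi\cdot\mathrm{datum})=(\ov\partial\chi)\cdot\mathrm{datum}$ is supported in $\Delta^n$, precisely where the complex Hessian of $\widetilde\phi$ is $\ge\widetilde C\sum_j|X_j|^2$. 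Applying Theorem~\ref{hormander} with a plurisubharmonic weight $\psi$ equal to $\widetilde\phi$ up to an additional term of the form $2n\log|z|$ (whose complex Hessian is nonnegative, and which forces the vanishing of the $\ov\partial$-solution at $0$) yields $u$ with $\ov\partial u=(\ov\partial\chi)\cdot\mathrm{datum}$ and $\int_{\widetilde\Omega}|u|^2e^{-\psi}\le\widetilde C^{-1}\int_{\mathrm{supp}\,\ov\partial\chi}|\ov\partial\chi|^2|\mathrm{datum}|^2e^{-\psi}$. Then $F:=\chi\cdot\mathrm{datum}-u$ is holomorphic on $\widetilde\Omega$, meets the interpolation condition defining $I^0$ (resp. $I^1$) at $0$, and has controlled $L^2(\widetilde\Omega)$-norm, so $I^0_{\widetilde\Omega}(0)\le\|F\|^2$ (resp. $I^1_{\widetilde\Omega}(0;Y)\le\|F\|^2$), and Theorem~\ref{bf} finishes the proof.

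The main obstacle is keeping every constant uniform in $\widetilde\Omega$. After the dilation $\widetilde\Omega$ may be an enormous domain — its diameter is only bounded by $\mathrm{diam}(\Omega)/\min_j\beta_j$, a quantity not controlled by $M,\widetilde C,C_\alpha$ — so one must ensure that neither the device forcing $u(0)=0$ nor the passage from $\int|u|^2e^{-\psi}$ to $\|u\|^2_{L^2(\widetilde\Omega)}$ introduces the size of $\widetilde\Omega$; and these pull in opposite directions, since a weight with a logarithmic pole strong enough to annihilate $u(0)$ necessarily decays away from $0$, so dividing by its infimum on $\widetilde\Omega$ reinstates $\mathrm{diam}(\widetilde\Omega)$. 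Reconciling the two is exactly what Catlin's "slight modification" of the proof of Theorem~\ref{hormander} is designed for — roughly, requiring the positive curvature bound only on (a neighborhood of) the support of the datum, keeping the weight's values there comparable to those on the polydisc, and measuring the error term only where it is needed — and I expect this bookkeeping to be the step demanding the most care.
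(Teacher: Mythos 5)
The paper does not prove this statement; it is imported verbatim from Catlin's paper, so there is no internal proof to compare against. Judged on its own terms, your reduction is the right one and is essentially Catlin's: dilate so that $P$ becomes the unit polydisc $\Delta^n\subset\widetilde\Omega$, pass to the minimum integrals via Theorem~\ref{bf} and the transformation formulas, get $I^0_{\widetilde\Omega}(0)\geq\pi^n$ and $I^1_{\widetilde\Omega}(0;Y)\geq I^1_{\Delta^n}(0;Y)\asymp(\sum_j|Y_j|^2)^{-1}$ from monotonicity, and produce the reverse inequalities by correcting $\chi\cdot(\mathrm{datum})$ with a weighted $\ov\partial$-solution. (Note also that hypothesis (5) is never used for the $K$ and $g$ bounds; Catlin needs it for other estimates.)

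The one place where your argument as written does not close is exactly the one you flag, and you should not leave it at ``I expect this bookkeeping to work'': with the weight $\psi=\widetilde\phi+2n\log|z|$ the passage from $\int_{\widetilde\Omega}|u|^2e^{-\psi}$ to $\|u\|^2_{L^2(\widetilde\Omega)}$ costs $\sup_{\widetilde\Omega}e^{\psi}\gtrsim(\mathrm{diam}\,\widetilde\Omega)^{2n}$, which is not controlled by $M,\widetilde C,C_\alpha$, so the constant in the theorem would silently depend on $\Omega$ and $\beta$. The missing device is concrete and uses hypotheses (1) and (4) off the origin, not just the refined Hörmander estimate: replace $2n\log|z|$ by a truncation $\sigma$ that equals $2n\log|z|$ for $|z|<1/4$ and is constant for $|z|\geq 1/2$, and take $\psi=N\widetilde\phi+\sigma$. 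On the transition annulus (which lies inside $\Delta^n$) the complex Hessian of $\sigma$ is bounded below by an absolute constant $-C_0$, while that of $N\widetilde\phi$ is at least $N\widetilde C$ by (4); choosing $N$ depending only on $\widetilde C$ and $C_0$ makes $\psi$ plurisubharmonic on all of $\widetilde\Omega$ with Hessian at least a uniform positive constant on $\mathrm{supp}\,\ov\partial\chi\subset\{1/4\leq|z|\leq1/2\}$. Now $\sigma$ is bounded above by an absolute constant and bounded below on $\mathrm{supp}\,\ov\partial\chi$, and $|N\widetilde\phi|\leq NM$ by (1), so both the data integral and the conversion to the unweighted norm cost only $e^{C(N,M)}$, independent of $\widetilde\Omega$; the singularity of $\sigma$ at $0$ still forces $u(0)=0$ (and $du(0)=0$ if you strengthen the pole for $I^1$, which is not needed here since the datum for $I^1$ already vanishes at $0$ to the right order once $u(0)=0$ and one checks $du(0)=0$ via a $(2n+2)\log$ pole). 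With this weight your competitor $F=\chi\cdot(\mathrm{datum})-u$ gives $I^0_{\widetilde\Omega}(0)\leq C$ and $I^1_{\widetilde\Omega}(0;Y)\leq C(\sum_j|Y_j|^2)^{-1}$ with $C=C(n,M,\widetilde C)$, and the theorem follows. With that paragraph supplied, your proposal is a complete and correct proof along Catlin's original lines.
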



\section{Plurisubharmonic functions on centered domains}

In this section, we recall results of Fu in \cite{Fu14}. From now on, suppose that $\Omega\subset\subset\mathbb{C}^n$ is a smooth pseudoconvex domain with constant Levi rank near $z_0\in\partial\Omega$. More precisely, there exists a neighborhood $V$ of $z_0$ such that the number of non-zero eigenvalues of the Levi form at $q$ is $l, 0\leq l\leq n-1$, for all $q\in V\cap\partial\Omega$.

\begin{prp}[Fu \cite{Fu14}]\label{center}
There exists a neighborhood $U\subset\subset V$ of $z_0$ such that for each $q\in U\cap\partial\Omega$, there is a biholomorphic mapping $\zeta=\Phi_q(z)$ from $U$ onto the unit ball $B(0,1)$ that satisfies
\begin{enumerate}
	\item $\Phi_q(q)=0$;
	\item $\Phi_q$ depends smoothly on $q$;
	\item $\Phi_q(U\cap\partial\Omega)$ has a defining function of the form
	$$
	\rho(\zeta)={\rm Re}\zeta_1+\sum_{j=2}^{l+1}\lambda_j|\zeta_j|^2+O(|\zeta'|^2\cdot|\zeta''|+|\zeta'|^3+|{\rm Im}\zeta_1|\cdot|\zeta|)
	$$
	near $0$, where $\lambda_j, 2\leq j\leq l+1$, are positive constants depending smoothly on $q$.
\end{enumerate}
\end{prp}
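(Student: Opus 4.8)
The plan is to produce $\Phi_q$ as a composition of a smoothly $q$-dependent chain of elementary biholomorphisms that successively normalize a defining function of $\partial\Omega$ at $q$, followed by a rescaling. Fix a smooth defining function $r$ of $\Omega$ on a neighborhood of $z_0$. First I would translate $q$ to the origin and apply a unitary rotation $A_q$ sending the complex normal covector $\partial r(q)$ to a multiple of $d\zeta_1$; choosing a smooth unitary frame along $\partial r/|\partial r|$ (Gram--Schmidt applied to any smooth frame) makes $A_q$ depend smoothly on $q$. Dividing $r$ by the modulus of the surviving linear coefficient and rotating by a phase makes the linear part exactly $\mathrm{Re}\,\zeta_1$. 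A quadratic shear $\zeta_1\mapsto\zeta_1+Q_q(\zeta')$, with $Q_q$ the holomorphic quadratic polynomial collecting the pluriharmonic quadratic terms not involving $\zeta_1$, absorbs those into $\mathrm{Re}\,\zeta_1$; the quadratic terms that do involve $\zeta_1$ are either of the form $\mathrm{Re}\,\zeta_1\cdot(\text{linear})$, to be removed below, or $O(|\mathrm{Im}\,\zeta_1|\,|\zeta|)$, which lies in the allowed error. This leaves $\mathrm{Re}\,\zeta_1+L_q+O(|\zeta|^3)$ with $L_q$ the Hermitian (Levi) part. Since the Levi form has constant rank $l$ and $\Omega$ is pseudoconvex, the restriction of $L_q$ to $\{\zeta_1=0\}$ has exactly $l$ positive eigenvalues and an $(n-1-l)$-dimensional kernel; a linear change of $\zeta_2,\dots,\zeta_n$ --- using $H_q^{-1/2}$ on the nondegenerate block (which is smooth in $q$ because $H_q>0$) together with a smooth choice of frame for the kernel --- brings $L_q$ to $\sum_{j=2}^{l+1}\lambda_j|\zeta_j|^2$ and makes both the $\zeta''$-block and the off-diagonal block between $\zeta'$ and $\zeta''$ of $L_q$ vanish at $0$; in particular there is no $|\zeta''|^2$ term.

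Next the cubic and higher terms. A non-pluriharmonic monomial carrying a single anti-holomorphic factor $\bar\zeta_m$ with $2\le m\le l+1$ can be cancelled by a further shear $\zeta_m\mapsto\zeta_m+P_{m,q}(\zeta)$, which produces the cross term $2\lambda_m\,\mathrm{Re}(\bar\zeta_m P_{m,q})$; a monomial carrying a $\mathrm{Re}\,\zeta_1$ factor is absorbed into $\mathrm{Re}\,\zeta_1$ upon multiplying the defining function by a positive factor $1+g_q(\zeta)$ (a defining function is determined only up to such a factor); a monomial carrying an $\mathrm{Im}\,\zeta_1$ factor is $O(|\mathrm{Im}\,\zeta_1|\,|\zeta|)$. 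The essential structural input is that the constant-rank hypothesis forces the remaining ``too degenerate'' coefficients to vanish: writing the complex Hessian of the defining function restricted to $\partial\Omega$, near $0$, in $(\zeta',\zeta'')$ block form with diagonal blocks $A,D$ and off-diagonal block $B$ (so $A(0)>0$, $B(0)=0$, $D(0)=0$), the non-negativity of the Levi form together with rank exactly $l$ gives, via the Schur complement, $D=B^{*}A^{-1}B=O(|B|^{2})=O(|\zeta|^{2})$ on $\partial\Omega$; since also the pluriharmonic quadratics have been removed, an induction on the order of vanishing shows that every coefficient ``sitting in a $D$-position'' or mixing one $\zeta'$-factor with several $\zeta''$-factors vanishes, leaving precisely the error $O(|\zeta'|^{3}+|\zeta'|^{2}|\zeta''|+|\mathrm{Im}\,\zeta_1|\,|\zeta|)$ --- the analytic shadow of the Levi foliation that exists under constant rank.

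Finally, smooth dependence on $q$ and the passage to $B(0,1)$: the translation is affine in $q$; $A_q$, $H_q^{-1/2}$, and the polynomials $Q_q,P_{m,q},g_q$ --- whose coefficients are polynomial in the Taylor coefficients of $r$ at $q$, hence smooth in $q$ --- as well as the data entering the cancellation of the previous step, all depend smoothly on $q$. Their composition is a biholomorphism $\Psi_q$ of bounded polynomial degree on a fixed neighborhood $U_0$ of $z_0$, with $\Psi_q(q)=0$ and $\Psi_q(U_0\cap\partial\Omega)$ in the normal form above. Post-composing with an affine dilation and, if one insists on the image being exactly $B(0,1)$, with a biholomorphism of $\Psi_q(U_0)$ onto $B(0,1)$ --- checking that this last step perturbs the normal form only within the stated error --- yields $\Phi_q$ and the neighborhood $U$.

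I expect the main obstacle to be this last layer of bookkeeping rather than any single ingredient: one must run the normalizations \emph{simultaneously}, keep the whole family smooth in $q$, carry out the constant-rank cancellation (the induction built on $D=B^{*}A^{-1}B=O(|\zeta|^{2})$) carefully enough that nothing outside $O(|\zeta'|^{2}|\zeta''|+|\zeta'|^{3}+|\mathrm{Im}\,\zeta_1|\,|\zeta|)$ reappears, and still land on the unit ball. Making the constant-rank vanishing precise while retaining smoothness in $q$ is the delicate point; this is what is carried out in \cite{Fu14}.
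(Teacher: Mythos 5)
The paper does not prove this proposition at all: it is quoted verbatim from Fu \cite{Fu14}, so your sketch must be measured against Fu's construction. Your elementary layers are fine and standard --- translation, a smoothly varying unitary frame, normalization of the linear part to ${\rm Re}\,\zeta_1$, the quadratic shear absorbing pluriharmonic terms, diagonalization of the Levi form using a smooth frame for its kernel (the off-diagonal and kernel blocks vanish automatically at $q$ for a positive semidefinite form), and the cancellation of cubic terms carrying a single $\ov{\zeta}_m$ in a nondegenerate direction by shearing against $\lambda_m|\zeta_m|^2$. The Schur-complement identity $D=B^{*}A^{-1}B$, valid at every nearby boundary point because the rank is exactly $l$ there, is also the right pointwise tool.

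The genuine gap is in the step you yourself flag as ``delicate.'' The stated error $O(|\zeta'|^2|\zeta''|+|\zeta'|^3+|{\rm Im}\,\zeta_1||\zeta|)$ vanishes \emph{identically} when $\zeta_1=0$ and $\zeta'=0$, so the proposition asserts that $\rho\equiv 0$ on the germ of the complex $(n-1-l)$-plane of Levi-null directions, i.e.\ that $\partial\Omega$ contains a complex manifold of dimension $n-1-l$ through each $q$. No induction on Taylor coefficients can deliver this in the $C^\infty$ category: carried to every finite order it yields only vanishing to infinite order at the single point $q$, which is strictly weaker than vanishing on a neighborhood of $q$ inside that plane (think of a flat but nonzero remainder). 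This strong form is not cosmetic --- it is exactly what is used later in the paper to show that the polydisc $Q_{\delta(p),c}$, whose degenerate coordinates range over a set of \emph{fixed} size $c$, is contained in $\widetilde{\Omega}_{\pi(p)}$. The missing ingredient is the integrability of the Levi null distribution with complex leaves (the Levi foliation, valid for pseudoconvex hypersurfaces of constant Levi rank); one must first straighten the leaf through $q$ --- smoothly in $q$ --- so that the pure-$\zeta''$ part of $\rho$ is literally zero, and only then run your jet normalization and Schur-complement argument in the transverse variables. You invoke the foliation only as a closing metaphor rather than as the structural input on which the whole error estimate rests, so as written the proof does not close.
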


We will call the above biholomorphic mapping $\Phi_q$ as the {\it centering map} at $q$, since it is a variation of the scaling map of Pinchuk's scaling procedure. For each {\it centered domain} $\widetilde{\Omega}_q$, Fu constructed the following bounded plurisubharmonic function with large Hessian:

\begin{thm}[Fu \cite{Fu14}]\label{psh}
Let $W\subset\subset U$ be a neighborhood of $z_0$. Then for any $q\in W\cap\partial\Omega$ and any sufficiently small $\delta>0$, there exists a function $g_{q,\delta}\in C^{\infty}(\ov{\widetilde{\Omega}_q})$ and constants $a,C>0$ and $C_{\alpha}$, independent of $q$ and $\delta$, such that
\begin{enumerate}
	\item $|g_{q,\delta}(\zeta)|\leq 1, z\in\widetilde{\Omega}_q$;
	\item $g_{q,\delta}$ is a plurisubharmonic on $\widetilde{\Omega}_q$;
	\item for $\zeta\in P_{\delta,a}\cap\widetilde{\Omega}_q$ and $\xi\in\mathbb{C}^n$,
	$$
	\sum^n_{j,k=1}\frac{\partial^2g_{q,\delta}(\zeta)}{\partial \zeta_j\partial \ov{\zeta_k}}\xi_j\ov{\xi_k} \geq \frac{1}{C} \Big(\frac{|\langle\partial\rho(\zeta),\xi\rangle|^2}{\delta^2}+\sum^{n-l}_{j=2}\frac{|\xi_j|^2}{\delta}+\sum^{n}_{k=n-l+1}|\xi_k|^2\Big);
	$$
	where $P_{\delta,a}:=\{\zeta\in\mathbb{C}^n; |\zeta_1|<a\delta, |\zeta_j|<a\delta^{\frac{1}{2}}, |\zeta_k|<a \}$, $2\leq j\leq n-l$, $n-l+1\leq k\leq n$.
	\item for $\zeta\in P_{\delta,a}\cap\widetilde{\Omega}_q$,
	$$
	|D^{\alpha}g_{q,\delta}(\zeta)|\leq C_{\alpha}\delta^{-(\alpha_1+\frac{1}{2}\sum_{j=2}^{n-l}\alpha_j)},
	$$
	where $D^{\alpha}=D_1^{\alpha_1}\cdots D_n^{\alpha_n}$.
\end{enumerate}
\end{thm}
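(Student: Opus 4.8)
The plan follows the ``bounded plurisubharmonic weight with large Hessian'' strategy of Catlin, specialized to the normal form of Proposition~\ref{center}. I would construct $g_{q,\delta}$ as a renormalization of $A\,\Psi_{q,\delta}+\varepsilon\sum_{k=n-l+1}^{n}|\zeta_k|^2+\chi_\delta\cdot\bigl(\tfrac1\delta\sum_{j=2}^{n-l}|\zeta_j|^2\bigr)$ (after relabelling so that the $l$ Levi-nondegenerate directions are $\zeta_{n-l+1},\dots,\zeta_n$), where $A$ is a large uniform constant, $\varepsilon$ a small one, $\chi_\delta$ a cut-off, and $\Psi_{q,\delta}$ a bounded plurisubharmonic ``background'' on all of $\widetilde\Omega_q$ carrying the large Hessian in the complex normal direction. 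The term $\varepsilon\sum_k|\zeta_k|^2$ is plurisubharmonic and bounded on $B(0,1)$ and at once supplies $\gtrsim\sum_{k=n-l+1}^{n}|\xi_k|^2$. Before anything else I would carry out the scaling bookkeeping forced by the polydisc $P_{\delta,a}$: from Proposition~\ref{center}(3) one has, on $P_{\delta,a}\cap\widetilde\Omega_q$, that $|\rho|\lesssim\delta$ and $\langle\partial\rho(\zeta),\xi\rangle=\tfrac12\xi_1+O(|\zeta|)|\xi|$, while the anisotropic bounds $|\zeta_1|<a\delta$, $|\zeta_j|<a\delta^{1/2}$ ($2\le j\le n-l$), $|\zeta_k|<a$ make every remainder term of $\rho$ and of $i\partial\bar\partial\rho$ subordinate to the target lower bound $\delta^{-2}|\langle\partial\rho,\xi\rangle|^2+\delta^{-1}\sum_{j=2}^{n-l}|\xi_j|^2+\sum_{k=n-l+1}^{n}|\xi_k|^2$. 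This degree-counting is routine but must be pinned down carefully, as it dictates the weights $\delta^{-1},\delta^{-1/2}$ in conclusion~(4) and the scaling built into $\chi_\delta$.

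For $\Psi_{q,\delta}$ I would begin from a Diederich--Fornæss type function: choosing a uniform exponent $\eta\in(0,1)$ for which $-(-\rho)^\eta$ is plurisubharmonic on $\widetilde\Omega_q$, one has $i\partial\bar\partial\bigl(-(-\rho)^\eta\bigr)=\eta(-\rho)^{\eta-1}i\partial\bar\partial\rho+\eta(1-\eta)(-\rho)^{\eta-2}i\partial\rho\wedge\bar\partial\rho$; rescaling $\rho$ by $\delta$ and composing with a bounded increasing convex function (such as $t\mapsto e^{t}-1$) produces a plurisubharmonic $\Psi_{q,\delta}$ with values in $[-1,0]$ whose complex Hessian on $\{-\rho\lesssim\delta\}$ dominates $\delta^{-2}|\langle\partial\rho,\xi\rangle|^2$ up to a term that semidefiniteness of the Levi form makes harmless. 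The \emph{genuinely degenerate} directions $\zeta_2,\dots,\zeta_{n-l}$ carry no globally bounded plurisubharmonic function of positive Hessian, which is exactly why one must localize $\tfrac1\delta\sum_{j=2}^{n-l}|\zeta_j|^2$ by a cut-off $\chi_\delta=\psi_\delta^2$ with $\psi_\delta$ depending only on the rescaled coordinates $\zeta_j/\delta^{1/2}$ and equal to $1$ on $P_{\delta,a}$; on $P_{\delta,a}$ this summand is bounded by a constant and contributes precisely $\gtrsim\delta^{-1}\sum_{j=2}^{n-l}|\xi_j|^2$. The error terms created by $\chi_\delta$ — a cross term and a $\chi_\delta$-Hessian term, supported on the transition shell — are then absorbed by $A\,\Psi_{q,\delta}$ after taking $A$ large, the $\psi_\delta^2$ device together with the first-paragraph bookkeeping being what makes the absorption go through. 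Assembling the three pieces and renormalizing so that $|g_{q,\delta}|\le1$ gives conclusions~(1)--(3).

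Conclusion~(4) then follows by differentiating the three pieces: $\sum_k|\zeta_k|^2$ has uniformly bounded derivatives, $\tfrac1\delta\sum_{j=2}^{n-l}|\zeta_j|^2$ is a polynomial carrying exactly the right powers of $\delta$, $\chi_\delta$ was chosen to be a function of $\zeta_j/\delta^{1/2}$ so that each $\partial_{\zeta_j}$ ($2\le j\le n-l$) costs a factor $\delta^{-1/2}$ and each $\partial_{\zeta_1}$ a factor $\delta^{-1}$, and for $\Psi_{q,\delta}$ — a composition involving $\rho/\delta$ — the quadratic-or-higher form of the remainder in Proposition~\ref{center}(3) forces $D_{\zeta_j}\rho=O(\delta^{1/2})$ on $P_{\delta,a}$ for $2\le j\le n-l$, which is exactly what the chain rule needs to produce the weight $\delta^{-(\alpha_1+\frac12\sum_{j=2}^{n-l}\alpha_j)}$; uniformity in $q$ follows from the smooth $q$-dependence in Proposition~\ref{center}(2). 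The main obstacle is the construction of $\Psi_{q,\delta}$ and the attendant absorption step: one needs a weight bounded by an \emph{absolute} constant whose complex Hessian nevertheless reaches the order $\delta^{-2}$ in precisely the directions and on precisely the regions needed both to seed the normal estimate and to dominate every localization error, and it is here that pseudoconvexity and the constant Levi rank — through the specific form of the remainder in Proposition~\ref{center}(3) — are used in an essential way. This is the technical core of Fu's argument in \cite{Fu14}.
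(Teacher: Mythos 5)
First, a point of comparison: the paper offers no proof of Theorem~\ref{psh} at all --- it is quoted verbatim from Fu \cite{Fu14} --- so your sketch can only be measured against Fu's actual construction and against what the statement can possibly assert. Your overall template (a bounded weight assembled from a normal-direction piece of Diederich--Forn\ae ss/exponential type plus quadratic pieces in the tangential directions, \`a la Catlin) is the right one, and your computation of $i\partial\bar\partial(-(-\rho)^{\eta})$ is correct. But the construction as you have set it up fails at its core: you assign the Hessian of size $\delta^{-1}$ to the Levi-\emph{degenerate} directions $\zeta_2,\dots,\zeta_{n-l}$ and try to manufacture it with a cut-off quadratic $\chi_\delta\cdot\tfrac1\delta\sum_j|\zeta_j|^2$. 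No uniformly bounded plurisubharmonic function can have complex Hessian $\geq c/\delta$ throughout a polydisc of radius $a\delta^{1/2}$ in a direction along which the domain contains a disc of uniform radius $r_0$ --- and by Freeman's foliation of the boundary by complex leaves tangent to the Levi null space, that is exactly the situation in the degenerate directions (think of the model $\Omega'\times D^{\,n-1-l}$). Indeed, restricting to such a disc, the hypothesis forces Riesz mass $\gtrsim ca^2$ inside $|z|<a\delta^{1/2}$, and Jensen's formula then forces an oscillation $\gtrsim ca^2\log(r_0/a\delta^{1/2})\to\infty$, contradicting $|g_{q,\delta}|\leq 1$. Concretely, in your ansatz the cut-off produces negative terms of size $\delta^{-1}|\xi_j|^2$ in the degenerate directions on the transition shell, and nothing available there is positive of that size in those directions: the Levi form vanishes on its null space, and $A\Psi_{q,\delta}$ only controls $|\langle\partial\rho,\xi\rangle|^2$ and the Levi-positive part. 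The absorption step you defer to cannot go through.

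The resolution is that the $\delta^{-1}$ estimate belongs to the $l$ Levi-\emph{nondegenerate} directions, and it comes out of the very formula you wrote down and then dismissed: on $\{-\rho\lesssim\delta\}$ one has $i\partial\bar\partial\bigl(e^{\rho/\delta}\bigr)\gtrsim \delta^{-2}|\langle\partial\rho,\xi\rangle|^2+\delta^{-1}\mathcal{L}_\rho(\xi)$, and the constant-rank hypothesis together with Proposition~\ref{center}(3) gives $\mathcal{L}_\rho(\xi)\gtrsim\sum|\xi_{\mathrm{nondeg}}|^2$ up to errors absorbable by the normal term --- the Levi-form summand is not ``harmless,'' it is the whole point. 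The degenerate directions then only require the $O(1)$ term $\varepsilon\sum|\zeta_k|^2$, and no cut-off quadratic is needed anywhere. (You were likely misled by an indexing slip in the paper: Proposition~\ref{center} places the nondegenerate directions at $2\leq j\leq l+1$, while Theorem~\ref{psh} attaches the weight $\delta^{-1}$ to $2\leq j\leq n-l$, a set of $n-1-l$ indices; these are consistent only if $l=n-1-l$. The mathematically correct reading, forced by the strongly pseudoconvex case $l=n-1$ and by Theorem~\ref{catlin}, is that the $l$ nondegenerate directions carry the weight $\delta^{-1}$ and radius $\delta^{1/2}$, and the $n-1-l$ degenerate ones carry weight and radius $O(1)$.) Your treatment of conclusion~(4) and of the uniformity in $q$ is fine in outline, but it inherits the same misassignment of scales.
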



\section{Proof of the Main Theorem}

We prove Theorem \ref{main} in this section. Theorem \ref{bf} implies that it suffices to show that there is a neighborhood $W$ of $z_0$ and a constant $C>0$, independent of $p$ and $X$ such that
$$
\frac{I^1_{\Omega}(p;X)I^1_{\Omega}(p;Y)}{I^0_{\Omega}(p)I^2_{\Omega}(p;X,Y)}\leq C,
$$
for all $p\in W\cap\Omega$ and $X,Y\in\mathbb{C}^n$.

Let $W\subset\subset U$ be neighborhoods of $z_0$ in Proposition \ref{center} and Theorem \ref{psh}. From the localization of minimum integrals (Theorem \ref{local}), we only need to show that there is constant $C>0$, independent of $p$ and $X$ such that
$$
\frac{I^1_{U\cap\Omega}(p;X)I^1_{U\cap\Omega}(p;Y)}{I^0_{U\cap\Omega}(p)I^2_{U\cap\Omega}(p;X,Y)}\leq C,
$$
for all $p\in W\cap\Omega$ and $X,Y\in\mathbb{C}^n$.

Denote by $\pi(p)$ the projection of $p$ onto the boundary $\partial\Omega$. It is well-defined near the boundary. We may assume that for all $p\in W\cap\Omega$, $\pi(p)\in U\cap\partial\Omega$. Then Proposition \ref{center} implies that for each $\pi(p)\in U\cap\partial\Omega$, there exists the centering map given by $\zeta=\Phi_{\pi(p)}(z)$ such that
$$
\zeta_p:=\Phi_{\pi(p)}(p)=(-\delta(p),0,\ldots,0),
$$ 
for some $\delta(p)>0$. By the transformation formula for minimum integrals, the above inequality is equivalent to
$$
\frac{I^1_{\widetilde{\Omega}_{\pi(p)}}(\zeta_p;\xi)I^1_{\widetilde{\Omega}_{\pi(p)}}(\zeta_p;\eta)}{I^0_{\widetilde{\Omega}_{\pi(p)}}(\zeta_p)I^2_{\widetilde{\Omega}_{\pi(p)}}(\zeta_p;\xi,\eta)}\leq C,
$$
where $\widetilde{\Omega}_{\pi(p)}=\Phi_{\pi(p)}(U\cap\Omega)$ and $\xi=d\Phi_{\pi(p)}(X),\eta=d\Phi_{\pi(p)}(Y)$.

Then Theorem \ref{psh} implies that there exist a constant $a>0$ and a family of bounded plurisubharmonic functions $\{\phi_p\}_{p\in W\cap\Omega}$ defined by
$$
\phi_p(\zeta):=g_{\pi(p),\delta(p)}(\zeta)
$$ 
such that $\phi_p$ have sufficiently large Hessians on $P_{\delta(p),a}\cap\widetilde{\Omega}_{\pi(p)}$.

For $c>0$, let
$$
Q_{\delta(p),c}:=\{\zeta\in\mathbb{C}^n; |\zeta_1+\delta(p)|<c\delta(p), |\zeta_j|<c\delta(p)^{\frac{1}{2}}, |\zeta_k|<c \},
$$
where $2\leq j\leq n-l$ and $n-l+1\leq k\leq n$.

Proposition \ref{center} implies that there exists sufficiently small constant $c>0$ such that
$$
Q_{\delta(p),c}\subset P_{\delta(p),a}\cap\widetilde{\Omega}_{\pi(p)},
$$
and
$$
\sum^n_{j,k=1}\frac{\partial^2\phi_p(\zeta)}{\partial \zeta_j\partial \ov{\zeta_k}}\xi_j\ov{\xi_k} \gtrsim \frac{|\xi_1|^2}{\delta(p)^2}+\sum^{n-l}_{j=2}\frac{|\xi_j|^2}{\delta(p)}+\sum^{n}_{k=n-l+1}|\xi_k|^2,
$$
for all $\zeta\in Q_{\delta(p),c}$ and $\xi\in\mathbb{C}^n$ (see \cite{Fu14}).

Since the Bergman kerenl and the Bergman metric at $p$ of the polydisc $P$ are given by
$$
K_P(p,\ov p)=\pi^{-n}\prod^n_{j=1}\frac{1}{\beta_j^2}, {\text\ \ \ \ \ \ } g_P(p;X)=2\sum^n_{j=1}\frac{|X_j|^2}{\beta_j^2},
$$
Theorem \ref{catlin} and Theorem \ref{bf} imply that 
$$
I^0_{\Omega}(p)\approx I^0_{P}(p) {\text\ \ \  and\ \ \ } I^1_{\Omega}(p;X)\approx I^1_{P}(p;X).
$$
Applying Theorem \ref{catlin} with $\widetilde{\Omega}_{\pi(p)}$, $\phi_p$, and the polydiscs $Q_{\delta(p),c}$ centered at $\zeta_p$, we get
$$
\frac{I^1_{\widetilde{\Omega}_{\pi(p)}}(\zeta_p;\xi)I^1_{\widetilde{\Omega}_{\pi(p)}}(\zeta_p;\eta)}{I^0_{\widetilde{\Omega}_{\pi(p)}}(\zeta_p)I^2_{\widetilde{\Omega}_{\pi(p)}}(\zeta_p;\xi,\eta)}\lesssim \frac{I^1_{Q_{\delta(p),c}}(\zeta_p;\xi)I^1_{Q_{\delta(p),c}}(\zeta_p;\eta)}{I^0_{Q_{\delta(p),c}}(\zeta_p)I^2_{Q_{\delta(p),c}}(\zeta_p;\xi,\eta)}=2-B_{Q_{\delta(p),c}}(\zeta_p;\xi,\eta).
$$

Therefore, the proof is completed by the following uniform estimate of the bisectional curvatures of polydisc at the center:
$$
0\leq-B_P(p;X,Y)=\frac{\sum^n_{j=1}\frac{|X_j|^2|Y_j|^2}{\beta_j^4}}{(\sum^n_{j=1}\frac{|X_j|^2}{\beta_j^2})(\sum^n_{k=1}\frac{|Y_k|^2}{\beta_k^2})}\leq 1.
$$


\section{Beyond the case of constant Levi rank}

Our method can be also applied to not only the case of constant Levi rank but also other bounded pseudoconvex domains, such as finite type domains in $\mathbb{C}^2$ and convex domains with finite type in $\mathbb{C}^n$. Since the proof is almost the same, we omit the proof of the following:

\begin{thm}\label{abstract}
Let $\Omega$ be a smooth pseudoconvex bounded domain in $\mathbb{C}^n$, and $z_0\in\partial\Omega$. Let $V$ be a neighborhood of $z_0$. Suppose that for each $p\in V\cap\Omega$, there exist a biholomorphic map $\zeta=\Psi_p(z)$ on $V\cap\Omega$, and a polydisc $P_p$ centered at $\Psi_p(p)$ with radius $\beta(p)=(\beta_1(p),\ldots,\beta_n(p))$ such that
$$
P_p\subset\Omega_p:=\Psi_p(V\cap\Omega).
$$
Assume that for each $p\in V\cap\Omega$, there exists a function $\phi_p\in C^3(\ov{\Omega_p})$ and positive constants $M,\widetilde{C},C_{\alpha}$, independent of $p$ such that
\begin{enumerate}
	\item $|\phi_p(\zeta)|\leq M$, for $\zeta\in\Omega_p$;
	\item $\phi_p$ is plurisubharmonic in $\Omega_p$;
	\item for all $\zeta\in P_p$ and $X\in\mathbb{C}^n$,
	$$
	\sum^n_{j,k=1}\frac{\partial^2\phi_p(\zeta)}{\partial \zeta_j\partial \ov{\zeta_k}}X_j\ov{X_k}\geq \widetilde{C} \sum^n_{j=1}\frac{|X_j|^2}{\beta_j(p)^2};
	$$
	\item for all $\alpha=(\alpha_1,\ldots,\alpha_n)$ with $|\alpha|=\sum_j\alpha_j\leq 3$ and $\zeta\in P_p$,
	$$
	|D^{\alpha}\phi_p(\zeta)|\leq C_{\alpha} \prod^n_{j=1} \beta_j(p)^{-\alpha_j},
	$$
	where $D^{\alpha}=D_1^{\alpha_1}\cdots D_n^{\alpha_n}$.
\end{enumerate}
Then there is a neighborhood $U$ of $z_0$ such that all the bisectional curvatures of the Bergman metric of $\Omega$ are bounded below by a negative constant in $U$. In particular, the holomorphic sectional curvatures and Ricci curvatures are also bounded below.
\end{thm}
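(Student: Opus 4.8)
The plan is to run exactly the same argument as in the proof of Theorem \ref{main}, simply replacing the centering maps $\Phi_{\pi(p)}$ by the given biholomorphisms $\Psi_p$ and Fu's plurisubharmonic functions by the abstract family $\{\phi_p\}$. By Theorem \ref{bf} (together with Pagano's polarized identity), it suffices to produce a neighborhood $U$ of $z_0$ and a constant $C>0$, independent of $p$ and $X,Y$, such that
$$
\frac{I^1_{\Omega}(p;X)\,I^1_{\Omega}(p;Y)}{I^0_{\Omega}(p)\,I^2_{\Omega}(p;X,Y)}\leq C
$$
for all $p\in U\cap\Omega$ and all $X,Y\in\mathbb{C}^n$; then $B_{\Omega}(p;X,Y)\geq 2-C$ everywhere in $U\cap\Omega$, which also forces the holomorphic sectional curvatures $H_\Omega(p;X)=B_\Omega(p;X,X)$ and the Ricci curvatures $\mathrm{Ric}_\Omega(p;X)=\sum_j B_\Omega(p;E^j,X)$ to be bounded below by $2-C$ and $n(2-C)$ respectively.

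First I would shrink $V$ to a pair of neighborhoods $W\subset\subset U'$ of $z_0$ and apply Theorem \ref{local} to reduce the estimate on $\Omega$ to the corresponding estimate for $U'\cap\Omega$. Next, for fixed $p\in W\cap\Omega$ I push everything forward by $\zeta=\Psi_p(z)$; by the transformation formula for the minimum integrals the quotient above is unchanged, so it is enough to bound
$$
\frac{I^1_{\Omega_p}(\Psi_p(p);\xi)\,I^1_{\Omega_p}(\Psi_p(p);\eta)}{I^0_{\Omega_p}(\Psi_p(p))\,I^2_{\Omega_p}(\Psi_p(p);\xi,\eta)}
$$
with $\xi=d\Psi_p(X),\ \eta=d\Psi_p(Y)$. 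For the denominators $I^0$ and the factors $I^1$, I invoke Theorem \ref{catlin} applied to $\Omega_p$, the bounded plurisubharmonic function $\phi_p$, and the polydisc $P_p=P(\Psi_p(p),\beta(p))\subset\Omega_p$: hypotheses (1)--(4) of Theorem \ref{abstract} are exactly hypotheses (1),(2),(5),(3)--(4) of Theorem \ref{catlin} with the $\beta_j$'s replaced by $\beta_j(p)$, so there is a constant $C'$, \emph{independent of $p$}, with
$$
I^0_{\Omega_p}(\Psi_p(p))\approx I^0_{P_p}(\Psi_p(p)),\qquad
I^1_{\Omega_p}(\Psi_p(p);\xi)\approx I^1_{P_p}(\Psi_p(p);\xi)
$$
(the $I^1$ equivalence following from the two-sided bound on $g_{\Omega_p}$ combined with the $I^0$ equivalence via $g=I^0/I^1$). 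For the numerator factor $I^2_{\Omega_p}$ I use monotonicity under inclusion, $I^2_{P_p}(\Psi_p(p);\xi,\eta)\leq I^2_{\Omega_p}(\Psi_p(p);\xi,\eta)$ (since $P_p\subset\Omega_p$), which puts $I^2$ in the denominator of an upper bound. Combining, the quotient is dominated, up to the uniform constant $C'$, by
$$
\frac{I^1_{P_p}(\Psi_p(p);\xi)\,I^1_{P_p}(\Psi_p(p);\eta)}{I^0_{P_p}(\Psi_p(p))\,I^2_{P_p}(\Psi_p(p);\xi,\eta)}
=2-B_{P_p}(\Psi_p(p);\xi,\eta).
$$
The proof then closes with the elementary polydisc bound quoted at the end of Section 5: for a polydisc $P$ with any radii,
$$
0\le -B_P(\text{center};\xi,\eta)=\frac{\sum_{j}|\xi_j|^2|\eta_j|^2/\beta_j^4}{\big(\sum_j|\xi_j|^2/\beta_j^2\big)\big(\sum_k|\eta_k|^2/\beta_k^2\big)}\le 1,
$$
so $2-B_{P_p}(\Psi_p(p);\xi,\eta)\le 3$, uniformly in $p$, $\xi$, $\eta$. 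Taking $U$ to be (an open subset of) $W$ completes the argument.

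The one point that needs care — and is the only place the hypotheses of Theorem \ref{abstract} really do work — is the uniformity of the constant $C'$ coming from Theorem \ref{catlin}: the statement of that theorem gives a constant depending only on $M,\widetilde C,C_\alpha$, and hypotheses (1)--(4) of Theorem \ref{abstract} are precisely designed so that these parameters can be chosen independent of $p$. I would simply note explicitly that, for every $p\in W\cap\Omega$, the pair $(\Omega_p,\phi_p)$ with polydisc $P_p$ satisfies the five conditions of Theorem \ref{catlin} with the \emph{same} $M,\widetilde C,C_\alpha$, and hence with the \emph{same} output constant $C'$; no limiting behavior of $\beta(p)$ as $p\to\partial\Omega$ is needed, only that $P_p\subset\Omega_p$. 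This is the step the analogue in Theorem \ref{main} handled via Fu's Theorem \ref{psh} (whose constants $a,C,C_\alpha$ were asserted to be independent of $q$ and $\delta$), and here it is built directly into the hypotheses, so there is genuinely nothing left to prove beyond this bookkeeping — which is why the paper omits the details.
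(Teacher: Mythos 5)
Your proposal is correct and follows exactly the route the paper intends: the paper omits the proof of Theorem \ref{abstract} precisely because it is the argument of Theorem \ref{main} with $\Phi_{\pi(p)}$, Fu's $g_{q,\delta}$, and $Q_{\delta(p),c}$ replaced by $\Psi_p$, $\phi_p$, and $P_p$, which is what you carry out (localization, transformation formula, Catlin's theorem for $I^0$ and $I^1$, monotonicity for $I^2$, and the uniform polydisc curvature bound). The only cosmetic point is that the localization should be taken with $U=V$ (i.e., $W\subset\subset V$) so that the pushed-forward domain is exactly $\Omega_p=\Psi_p(V\cap\Omega)$ containing $P_p$.
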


Proposition \ref{center} and Theorem \ref{psh} imply that Theorem \ref{main} is a corollary of Theorem \ref{abstract}. Moreover, we have following two corollaries.

\begin{cor}
Let $\Omega$ be a smooth pseudoconvex bounded domain in $\mathbb{C}^2$, and $z_0\in\partial\Omega$ be a point of finite type. Then there is a neighborhood $U$ of $z_0$ such that all the bisectional curvatures of the Bergman metric of $\Omega$ are bounded below by a negative constant in $U$.
\end{cor}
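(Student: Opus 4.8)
The plan is to obtain this as a direct consequence of Theorem~\ref{abstract}. Thus it suffices to produce, for each point $p$ in a fixed neighborhood $V$ of $z_0$ in $\Omega$, the three pieces of data required there: a biholomorphic map $\Psi_p$ on $V\cap\Omega$, a polydisc $P_p$ centered at $\Psi_p(p)$ with $P_p\subset\Psi_p(V\cap\Omega)$, and a function $\phi_p\in C^3(\ov{\Omega_p})$ that is bounded by a uniform constant, plurisubharmonic, has complex Hessian on $P_p$ bounded below by $\widetilde C\sum_j |X_j|^2/\beta_j(p)^2$, and has derivatives up to order $3$ controlled by $C_\alpha\prod_j \beta_j(p)^{-\alpha_j}$, with all constants independent of $p$. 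This is exactly the package constructed by Catlin in \cite{Catlin89} for a smooth bounded pseudoconvex domain in $\mathbb{C}^2$ near a finite type point, so the proof amounts to checking that his construction fits the hypotheses of Theorem~\ref{abstract}.

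Concretely, I would proceed as follows. Let $z_0$ be of finite type $2m$, and for $p$ near $z_0$ set $\delta=\delta(p)=\mathrm{dist}(p,\partial\Omega)$ and let $q=\pi(p)$ be the boundary projection of $p$. After a unitary rotation placing the complex normal at $q$ along the first axis, Catlin exhibits a polynomial (triangular/shear) automorphism $\Psi_p$ of $\mathbb{C}^2$, depending smoothly on $p$ and with constant nonzero Jacobian determinant, in which $\partial\Omega$ is normalized so that its flatness along the complex tangential direction is measured by a quantity $\tau=\tau(q,\delta)$ with $\delta^{1/2}\lesssim\tau\lesssim\delta^{1/(2m)}$. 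Put $\beta(p)=(\delta,\tau)$ and $P_p=P(\Psi_p(p),\beta(p))$; Catlin shows $P_p\subset\Psi_p(V\cap\Omega)$ once $p$ lies in a small enough neighborhood of $z_0$. On $\Omega_p:=\Psi_p(V\cap\Omega)$ he then builds a plurisubharmonic function $\phi_p$ with $|\phi_p|\le 1$, whose complex Hessian on $P_p$ dominates $|X_1|^2/\delta^2+|X_2|^2/\tau^2$ and whose derivatives of order $\le 3$ are bounded by $C_\alpha\,\delta^{-\alpha_1}\tau^{-\alpha_2}$, with $M=1$ and with $\widetilde C,C_\alpha$ independent of $p$.

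Feeding this data into Theorem~\ref{abstract} (with $n=2$, $\beta_1(p)=\delta(p)$, $\beta_2(p)=\tau(q,\delta)$) immediately yields a neighborhood $U$ of $z_0$ on which all bisectional curvatures of the Bergman metric of $\Omega$ are bounded below by a negative constant; the holomorphic sectional curvatures and Ricci curvatures then inherit the bound exactly as in Theorem~\ref{main}.

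The only genuine point to watch is \emph{uniformity}: one must verify that the constants $M,\widetilde C,C_\alpha$ and the inclusion $P_p\subset\Psi_p(V\cap\Omega)$ hold uniformly as $p$ ranges over an entire neighborhood of $z_0$ in $\Omega$, not merely along a single normal segment. This is precisely where the finite type hypothesis is used, and it is exactly what Catlin's estimates in \cite{Catlin89} provide; once this is granted, the deduction from Theorem~\ref{abstract} is purely formal.
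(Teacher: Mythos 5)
Your proposal is correct and follows exactly the paper's route: the paper's proof is a one-line citation of Propositions 1.1 and 4.2 of \cite{Catlin89}, which supply precisely the normalized coordinates, the polydisc of multiradius $(\delta,\tau)$, and the uniformly bounded plurisubharmonic function with large Hessian that you feed into Theorem~\ref{abstract}. You have merely spelled out what those propositions contain; no new ideas or gaps are involved.
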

\begin{proof}
Use Proposition 1.1 and Proposition 4.2 in \cite{Catlin89}.
\end{proof}

\begin{cor}
Let $\Omega$ be a smooth pseudoconvex bounded domain in $\mathbb{C}^n$, and $z_0\in\partial\Omega$ be a point of finite type in the sense of D'Angelo. Let $V$ be a neighborhood of $z_0$ such that $V\cap\partial\Omega$ is convex. Then there is a neighborhood $U$ of $z_0$ such that all the bisectional curvatures of the Bergman metric of $\Omega$ are bounded below by a negative constant in $U$.
\end{cor}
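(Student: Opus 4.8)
The plan is to verify the hypotheses of Theorem \ref{abstract} for a convex finite-type boundary point, so that the conclusion follows immediately. The whole task reduces to constructing, for each $p \in V\cap\Omega$ near $z_0$, a biholomorphism $\Psi_p$, a polyradius $\beta(p)$, a polydisc $P_p$ centered at $\Psi_p(p)$ with $P_p\subset\Omega_p:=\Psi_p(V\cap\Omega)$, and a uniformly bounded plurisubharmonic function $\phi_p$ whose complex Hessian is comparable to $\sum_j |X_j|^2/\beta_j(p)^2$ throughout $P_p$, with all constants $M, \widetilde C, C_\alpha$ independent of $p$.

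First I would invoke the theory of convex finite-type domains to obtain the right geometric scaling. For a convex domain of finite type in the sense of D'Angelo, McNeal's construction of a distinguished polydisc (the ``minimal basis'' adapted to the boundary geometry) attaches to each boundary-near point $p$ and each scale $\tau>0$ an affine-linearly-defined box $P_p$ whose side lengths $\beta_j(p)=\tau_j(\pi(p),\tau)$ encode the directional distances to the boundary; these are exactly the quantities governing the Bergman kernel asymptotics in McNeal's work on convex domains. I would take $\Psi_p$ to be the composition of a translation sending $p$ to the origin with a unitary rotation and the anisotropic dilation normalizing these box directions to unit scale, so that $\Psi_p(P_p)$ becomes a fixed unit polydisc. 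Convexity guarantees $\Psi_p$ is a genuine biholomorphism of $V\cap\Omega$ onto its image (affine maps suffice here), and the finite-type hypothesis gives uniform control on how $\beta(p)$ degenerates as $p\to z_0$, which is what makes the constants in Theorem \ref{abstract} uniform.

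Next I would produce the plurisubharmonic function $\phi_p$ with large Hessian. The natural candidate, following the convex finite-type literature, is built from the defining function together with the McNeal polydisc data: one forms a bounded plurisubharmonic weight on $\Omega_p$ whose Levi form dominates $\widetilde C\sum_j |X_j|^2/\beta_j(p)^2$ on $P_p$. Concretely, after the scaling $\Psi_p$ has normalized the box to unit size, the rescaled defining function of $\Omega_p$ is uniformly bounded in $C^3$ and uniformly plurisubharmonic near the origin by the finite-type condition, so a suitable bounded modification (e.g. combining the rescaled defining function with $\log$ of a comparison function as in Catlin's and Fu's constructions) yields $\phi_p$ satisfying conditions (1)--(4) with uniform constants. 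The key point is that convexity plus finite type forces the Levi form, measured in the scaled coordinates, to be bounded below uniformly, which is precisely hypothesis (3).

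The main obstacle will be establishing the \emph{uniformity} of all constants as $p$ approaches the boundary: one must check that McNeal's polydiscs vary with enough regularity and that the rescaled defining functions stay in a compact family in $C^3$, so that a single choice of $M,\widetilde C,C_\alpha$ works for every $p\in U\cap\Omega$. This requires that the minimal-basis construction depends continuously (indeed tamely) on the base point and scale, and that the finite-type bound is uniform over $V\cap\partial\Omega$ after shrinking $V$; both follow from the standard convex finite-type machinery but demand care. Once this uniform family $(\Psi_p, P_p, \beta(p), \phi_p)$ is in hand, Theorem \ref{abstract} applies verbatim and produces the desired neighborhood $U$ and the negative lower bound on the bisectional curvatures.
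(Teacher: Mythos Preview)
Your proposal is correct and follows essentially the same approach as the paper: the paper's entire proof is the one-line instruction to invoke Propositions 2.2 and 3.1 of Chen's thesis \cite{Chen89} (equivalently, Propositions 2.1 and 3.1 of McNeal \cite{McNeal94}) to supply the biholomorphisms $\Psi_p$, the anisotropic polydiscs $P_p$ with radii $\beta_j(p)$, and the bounded plurisubharmonic weights $\phi_p$ with uniformly large Hessian needed in Theorem \ref{abstract}. Your sketch spells out exactly this machinery (McNeal's minimal-basis polydiscs, affine $\Psi_p$, and the associated psh weight), so there is no substantive difference; just be careful that in the framework of Theorem \ref{abstract} the map $\Psi_p$ need only be the translation and rotation into McNeal coordinates (so that $P_p$ sits in $\Omega_p$ with radii $\beta_j(p)$), not the further anisotropic dilation to unit scale.
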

\begin{proof}
Use Proposition 2.2 and 3.1 in \cite{Chen89} (cf. Proposition 2.1 and 3.1 in \cite{McNeal94}).
\end{proof}


\bibliographystyle{amsplain}


\end{document}